\newcommand{\R}{\mathbb{R}}
\newcommand{\RQ}{\textrm{RQ}}
\DeclareMathOperator{\id}{Id}
\DeclareMathOperator{\supp}{supp}
\DeclareMathOperator{\diag}{diag}
\theoremstyle{plain}
\newtheorem{theorem}{Theorem}[section]
\newtheorem{proposition}[theorem]{Proposition}
\newtheorem{corollary}[theorem]{Corollary}
\newtheorem{convention}{Convention}
\newcommand\ddfrac[2]{\frac{\displaystyle #1}{\displaystyle #2}}
\theoremstyle{definition}
\newtheorem{definition}[theorem]{Definition}
\newtheorem{example}[theorem]{Example}
\theoremstyle{remark}
\newtheorem{remark}[theorem]{Remark}
\title{Coloring outside the lines: Spectral bounds for generalized hypergraph colorings}
\author{Lies Beers\thanks{e.g.m.beers@vu.nl} }
\author{Raffaella Mulas}
\date{}
\affil{Vrije Universiteit Amsterdam}
\begin{document}

\maketitle

\begin{abstract} 
It is known that, for an oriented hypergraph with (vertex) coloring number $\chi$ and smallest and largest normalized Laplacian eigenvalues $\lambda_1$ and $\lambda_N$, respectively, the inequality $\chi\geq (\lambda_N-\lambda_1)/\min\{\lambda_N-1,1-\lambda_1\}$ holds. We provide necessary conditions for oriented hypergraphs for which this bound is tight. Focusing on $c$-uniform unoriented hypergraphs, we then generalize the bound to the setting of \emph{$d$-proper colorings}: colorings in which no edge contains more than $d$ vertices of the same color. We also adapt our proof techniques to derive analogous spectral bounds for \emph{$d$-improper colorings} of graphs and for edge colorings of hypergraphs. Moreover, for all coloring notions considered, we provide necessary conditions under which the bound is an equality.

\vspace{0.2cm}
\noindent {\bf Keywords:} Hypergraphs, Normalized Laplacian, Vertex colorings, Edge coloring\\
\noindent {\bf MSC:} 05C65, 05C15, 05C50

\end{abstract}

\section{Introduction}
\begin{figure}
    \centering
    \includegraphics[width=0.75\linewidth]{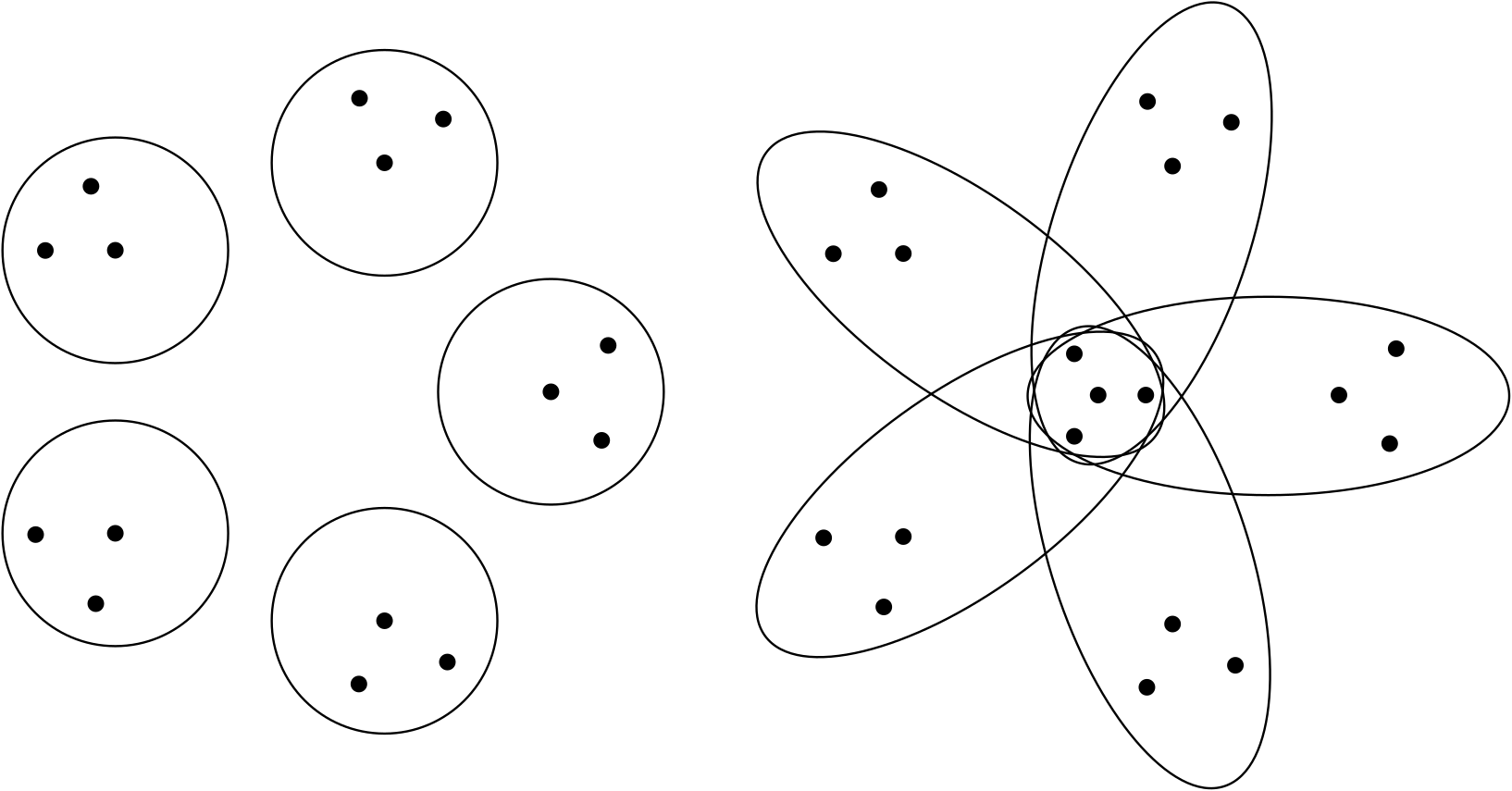}
    \caption{Two examples of hypergraphs: a disconnected hypergraph with five disjoint edges (left) and a hyperflower with five petals (right).}
    \label{fig:disjointedgestohyperflower}
\end{figure}
Classical hypergraphs, which generalize graphs by allowing edges to contain any number of vertices, were formally developed by Berge (1970) \cite{Berge1970}. Figure \ref{fig:disjointedgestohyperflower} shows two examples of hypergraphs.
In a 1994 paper \cite{Erd94}, Erd\H os wrote: \emph{As far as I know, the subject of hypergraphs was first mentioned by T.\ Gallai in a conversation with me in 1931.} \cite{chung1998erdos} Moreover, before their name was coined, hypergraphs were already being studied as \emph{set systems}, for instance, by Erd\H os and Hajnal in their 1966 work on coloring numbers of set systems \cite{ErdosHajnal1966}.
This can be translated into Berge's hypergraph framework, where the \emph{coloring number} or \emph{weak coloring number} of a hypergraph is defined as the least number of colors needed to color the vertices of a hypergraph such that no edge is \emph{monochromatic}, i.e., all edges contain at least two colors.
This notion generalizes the graph coloring number.
While the weak coloring number prohibits monochromatic edges, the \emph{strong coloring number} requires every edge to be \emph{polychromatic}, i.e., all vertices in an edge have different colors. 

To accommodate more flexible coloring constraints, De Werra (1979) \cite{DeWerra1979} introduced \emph{$I$-regular colorings}: for every edge, the number of vertices of any color that it contains is bounded both above and below.
In this paper, we introduce and study a specific case of $I$-regular colorings called \emph{$d$-proper colorings}: these are colorings where every edge contains at most $d$ vertices of a fixed color. The concept of $d$-proper colorings generalizes strong colorings (which correspond to the case $d=1$). Moreover, for \emph{$c$-uniform hypergraphs} (i.e., hypergraphs whose edges all have the same size $c$), a $c-1$-proper coloring corresponds to a weak coloring.

The concept of $d$-proper colorings was also studied by
F\"uredi and Kahn (1986) \cite{FurediKahn1986}, who gave a simultaneous upper bound on $d$ and the $d$-proper coloring number in terms of the maximum vertex degree of a hypergraph (Lemma 4.2 of \cite{FurediKahn1986}).
Moreover, $d$-proper colorings are closely related to the notion of \emph{$d$-weak independent sets}: subsets of vertices that intersect every edge in at most $d$ vertices.
In a recent preprint, Adamson, Rosenbaum, and Spirakis (2024) \cite{adamson2024distributed} studied algorithms for constructing $d$-weak independent sets of a certain size.
These connections underline the study of $d$-proper colorings from a combinatorial and algorithmic perspective.

In this paper, we shall study spectral bounds for various notions of coloring numbers, both for classical hypergraphs and \emph{oriented hypergraphs}, which were introduced by 
Shi (1992) \cite{shi1992signed}, as a generalization of classical hypergraphs in which an orientation is attached to every vertex-edge incidence.
The adjacency matrix and Kirchoff Laplacian matrices for oriented hypergraphs were introduced by Reff and Rusnak (2012) \cite{ReffRusnak2012}, and the normalized Laplacian was introduced by Jost and Mulas (2019) \cite{JostMulas2019}.
These matrices encode information about the structure of the hypergraphs, as they do in the graph case.
We refer the reader to
\cite{ChenLiuRobinsonRusnakWang2018, ChenRaoRusnakYang2015, DuttweilerReff2019, RusnakReynesJohnsonYe2021, GrillietteReynesRusnak2022, RusnakReynesLiYanYu2024, KitouniReff2019, Reff2014, Reff2016, ReffRusnak2012, RusnakRobinsonSchmidtShroff2019, Rusnak2013}
for literature on the adjacency and Kirchhhoff Laplacian matrices, and to
\cite{AndreottiMulas2022, JostMulas2019, Mulas2021, MulasZhang2021, mulas2022hypergraphs}
for literature on the normalized Laplacian matrix.

Strong colorings of a hypergraph can be obtained by coloring its \emph{representative graph}, in which every edge of size $c$ is replaced by a $c$-clique. As a result, strong colorings have received less attention in the literature.
When considering matrices associated to a hypergraph whose spectra provide different information than spectra of matrices associated to the corresponding representative graph, it becomes natural to study their spectral connection to the strong coloring number.

Abiad, Mulas and Zhang (2021) \cite{Abiad20} gave a spectral lower bound for the strong coloring number $\chi$ of an oriented hypergraph:
\begin{equation}\label{eq:boundgen}
\chi\geq\frac{\lambda_N-\lambda_1}{\min\bigl\{\lambda_N-1,1-\lambda_1\bigr\}},
\end{equation}
where $\lambda_1$ and $\lambda_N$ are the smallest and largest normalized Laplacian eigenvalues, respectively.
This bound generalizes the graph bound
\begin{equation}
\chi\geq \frac{\lambda_N}{\lambda_N-1},
\end{equation}
first proven by Elphick and Wocjan (2015) \cite{ElphickWocjan2015} and later explicitly stated by Coutinho, Grandsire, and Passos (2019) \cite{Gabriel-colouring}.

For regular graphs, this bound coincides with the \emph{Hoffman bound} \cite{Hoffman1970}, which relates the adjacency eigenvalues to the graph vertex coloring number. The Hoffman bound was generalized to hypergraphs in different ways, first by Nikiforov (2019) \cite{nikiforov2019hoffman}, who generalized the bound for $(2c)$-uniform hypergraphs using the spectrum of the quadratic form of the adjacency matrix. Banerjee (2021) \cite{Banerjee2021} generalized the Hoffman bound to a generalized hypergraph adjacency matrix for arbitrary unoriented hypergraphs.

Bilu (2006) \cite{Bilu2006} generalized the Hoffman bound to \emph{$d$-improper colorings} of graphs: vertex colorings of a graph such that no vertex has more than $d$ neighbors of the same color. Bilu proved a generalized Hoffman bound for the \emph{$d$-improper coloring number} $\chi^d$ in terms of the adjacency eigenvalues. We shall prove an analogous bound for the normalized Laplacian eigenvalues (Theorem \ref{thm:d-improper}).

Building on both Bilu's bound and the bound in \eqref{eq:boundgen}, our contributions include the following:
\begin{itemize}
    \item We present an alternative proof of the bound in \eqref{eq:boundgen} and prove necessary conditions for the bound to be sharp, which gives a relation between the eigenspaces of the smallest and largest eigenvalues (Propositions \ref{prop:1-lambda1} and \ref{prop:lambdaN-1}).
    \item Focusing on $c$-uniform hypergraphs that are \emph{unoriented}, i.e., for which all vertex-edge incidences have a positive orientation, the bound is \begin{equation}\label{eq:boundc-un}
        \chi \geq \frac{c-\lambda_1}{1-\lambda_1}.
    \end{equation}
    For $c$-uniform unoriented hypergraphs for which the bound is an equality, we prove more explicit results about their structure (Theorem \ref{thm:uniformstrong} and Corollary \ref{cor:equality}).
    \item We generalize the above results to the $d$-proper coloring number $\chi_d$ (the minimal number of colors needed to color a hypergraph $d$-properly), and we prove the bound (Theorem \ref{thm:d-proper})
    \begin{equation}\label{eq:d-proper}
        \chi_d \geq\frac{c-\lambda_1}{d-\lambda_1}.
    \end{equation}
    \item We use the proof idea of Theorem \ref{thm:d-proper} to prove a bound similar to \eqref{eq:d-proper} for $d$-improper colorings of graphs (Theorem \ref{thm:d-improper}).
    \item We use the idea of the proof of Theorem \ref{thm:cor5.4AMZ} to prove a bound similar to the one in \eqref{eq:boundgen} for edge coloring of hypergraphs (Theorem \ref{thm:edgecoloring}).
\end{itemize}




\section{Background}

\subsection{The normalized Laplacian for hypergraphs}
We begin by defining the normalized Laplacian for graphs, originally introduced by Chung (1992) \cite{chung}, using the vertex-edge incidence matrix. We then extend this definition to hypergraphs, following the generalization proposed by Jost and Mulas (2019) \cite{JostMulas2019}. We emphasize properties that are most relevant to our paper, many of which are also discussed in \cite{JostMulas2019,Mulas2021,mulas2022hypergraphs,AndreottiMulas2022,Mulas-Cheeger}.
Specifically, we define oriented hypergraphs, introduce the Rayleigh quotients of their normalized Laplacian, and explore the relationship between the Rayleigh quotient and the eigenvalues of the normalized Laplacian.
\begin{definition}\label{def:orientedhypergph}
    An \emph{oriented hypergraph} $\Gamma = (V, E,\varphi)$ consists of a vertex set $V$, an edge set $E \subset \mathcal P\bigl(V\bigr)$, and an orientation function $\varphi\colon V\times E \to \{0,-1,1\}$ such that $\varphi(v,e)\neq 0 \iff v\in e$.
\end{definition}

We consider the following subclass of oriented hypergraphs.
\begin{definition}
    For $c\geq 1$, an oriented hypergraph is \emph{$c$-uniform} if $|e| = c$ for all $e\in E$.
\end{definition}
\begin{example}
    An example of a $3$-uniform oriented hypergraph can be found in Figure \ref{fig:hypergraphA=0} below. 
\end{example}
\begin{definition}
    Let $v,v'\in V$ and $e\in E$.
    If $\varphi(v,e) = 1$ (resp.\ $\varphi(v,e)=-1$), then $v$ is an \emph{output} (resp.\ \emph{input}) of $e$. If 
    \begin{equation*}
        \varphi(v,e) = \varphi(v',e)\neq 0 \quad \text{(resp.\ $\varphi(v,e) = -\varphi(v',e)\neq 0)$,}
    \end{equation*}
    then $v$ and $v'$ are \emph{co-oriented} (resp.\ \emph{anti-oriented}) in $e$.
\end{definition}
\begin{remark}
    A simple graph can be seen as an oriented hypergraph such that every edge has exactly one input and one output.
\end{remark}
We now give a generalization of connected and bipartite graphs, respectively.
\begin{definition}\label{def:bipartite}
    The oriented hypergraph $\Gamma = (V, E,\varphi)$ is \emph{bipartite} if we can partition its vertex set $V= V_1\sqcup V_2$ such that, for every edge $e\in E$, either $e$ has all its inputs in $V_1$ and all its outputs in $V_2$, or vice versa.
\end{definition}
\begin{example}
    If an oriented hypergraph $\Gamma = (V, E,\varphi)$ is such that $\varphi(V) = \{0,1\}$, then $\Gamma$ is bipartite.
\end{example}
This leads us to the following definition.
\begin{definition}\label{def:underlying}
    For an oriented hypergraph $\Gamma = (V, E,\varphi)$, its \emph{underlying hypergraph} is $\Gamma^{+}\coloneqq \bigl(V,E,\bigl|\varphi\bigr|\bigr)$, where $|\varphi|$ indicates whether a vertex belongs to an edge, ignoring the orientation. That is, $$\bigl|\varphi\bigr|(v,e):=|\varphi(v,e)|.$$ 
\end{definition} 
For the remainder of this section, we fix an oriented hypergraph $\Gamma = (V, E, \varphi)$ with $|V| = N$ and $|E| = M$. Using the orientation function $\varphi$, we define the incidence matrix, the adjacency matrix, and the normalized Laplacian of $\Gamma$.
\begin{definition}
    The \emph{incidence matrix} $\mathcal I$ is the $N\times M$ matrix with entries
    \[
    \mathcal I_{i,j} = \varphi\bigl(v_i,e_j\bigr).
    \]
\end{definition}
Note that, in the graph case, the Kirchhoff Laplacian of a graph $G$ is defined as the matrix $$K\coloneqq K(G) \coloneqq \mathcal I \mathcal I^\top,$$ where the incidence matrix corresponds to any arbitrary orientation of the graph, such that each edge has exactly one input and one output. Equivalently, the graph Kirchhoff Laplacian can be written as $K = D - A$, where $D$ is the \emph{degree matrix}, whose diagonal entries are the degrees of the corresponding vertices, and $A$ is the adjacency matrix. Similarly, the \emph{normalized Laplacian} of a graph $G$ is the matrix $$L \coloneqq L(G) \coloneqq D^{-1}\mathcal I \mathcal I^\top = \id - D^{-1}A.$$

We define the adjacency matrix, degree (matrix), Kirchhoff Laplacian, and normalized Laplacian for hypergraphs in analogy with the graph case.
\begin{definition}\label{def:matrices}
    The \emph{Kirchhoff Laplacian} is the matrix
    \begin{equation*}
        K\coloneqq \mathcal I \mathcal I^\top.
    \end{equation*}
    Additionally, the \emph{degree matrix} is the diagonal matrix 
    \[D\coloneqq \diag\bigl(\deg(v_1),\ldots,\deg(v_N)\bigr),\] where the degree of a vertex $v$ is defined as
    \begin{equation*}
        \deg(v)\coloneqq \bigl| \bigl\{ e\in E \colon v\in e \bigr\} \bigr|.
    \end{equation*}
    The \emph{adjacency matrix} is the $N\times N$ matrix with zeroes along the diagonal such that, for $i\neq j$, its entries are given by
    \begin{align*}
    A_{i,j} \coloneqq &\# \text{ edges where $v_i$ and $v_j$ are anti-oriented }\\ - &\# \text{ edges where $v_i$ and $v_j$ are co-oriented.}
    \end{align*}
    Finally, the \emph{normalized Laplacian} is the $N\times N$ matrix defined as
    \[
    L \coloneqq D^{-1}K = \id - D^{-1}A.
    \]
    We denote its eigenvalues by
    \[
    \lambda_1\leq \ldots \leq \lambda_N.
    \]
\end{definition}
We use the notation
\[
\bigl\{ \mu_1^{(m_1)},\ldots,\mu_p^{(m_p)} \bigr\}
\]
to denote a multiset containing the element $\mu_i$ with multiplicity $m_i$. Throughout the paper, we shall also use the notation $m_\Gamma(\lambda)$ for the multiplicity of $\lambda$ as an eigenvalue of the normalized Laplacian of $\Gamma$.
\begin{remark}\label{rmk:spectrumconcomp}
    The spectrum of a hypergraph equals the union of the spectra of its connected components.
\end{remark}
\begin{remark}
The entries of $L$ can be written as
\[
L_{i,j} = \frac{\text{ edges where $v_i$ and $v_j$ are co-oriented } - \# \text{ edges where $v_i$ and $v_j$ are anti-oriented}}{\deg v_i}.
\]
Note that there is not necessarily a one-to-one correspondence between an oriented hypergraph and its normalized Laplacian. For example, a hypergraph with no edges satisfies $A=\boldsymbol{0}$ and $L=\id$. The following example gives a non-trivial example of an oriented hypergraph with $A = \boldsymbol{0}$ and $L = \id$.
\end{remark}
\begin{example}\label{ex:A=0}
    Let $\Gamma$ (Figure \ref{fig:hypergraphA=0}) be the hypergraph with vertex set $V = \bigl\{v_1,v_2,v_3,v_4\bigr\}$, edge set $E = \bigl\{ e_1,e_2,e_3,e_4\bigr\}$, with $e_i\coloneqq V\setminus \{v_i\}$, and incidence matrix
    \[
    \mathcal I(\Gamma) \coloneqq \begin{bmatrix}
        0&1&-1&-1\\
        1&0&-1&1\\
        1&-1&0&-1\\
        1&1&1&0
    \end{bmatrix}.
    \]
    Then, one can verify that $A = 3\id - \mathcal I(\Gamma)\mathcal I(\Gamma)^\top =  \boldsymbol{0}$, hence $L = \id$.
\end{example}
\begin{figure}
    \centering
    \includegraphics[width=0.35\linewidth]{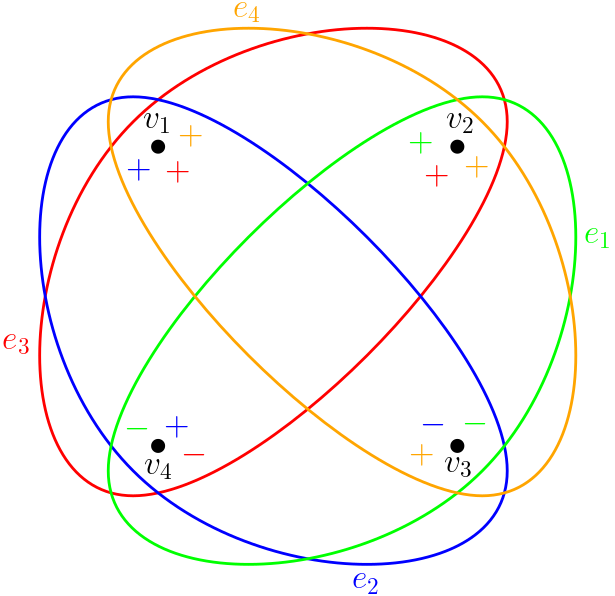}
    \caption{An oriented hypergraph with $A=\boldsymbol 0$.}
    \label{fig:hypergraphA=0}
\end{figure}
The above example motivates the following observation.
\begin{remark}\label{rmk:A=0}
We have 
\begin{align*}
    A=\textbf{0} &\iff L=I \\
    &\iff \lambda_1=\ldots=\lambda_N=1\\
    &\iff \lambda_1=1\\
    &\iff \lambda_N=1.
\end{align*}
The last two characterizations follow from the fact that $\sum_{i=1}^N\lambda_i=N$. Hence, if $$1=\lambda_1\leq \lambda_2\leq \ldots\leq\lambda_N,$$ then $1=\lambda_1= \ldots=\lambda_N$. Similarly, if $$\lambda_1\leq \lambda_2\leq \ldots\leq\lambda_N=1,$$ then $\lambda_1= \ldots=\lambda_N=1$. In particular, this implies that, if $A\neq \textbf{0}$, then $\lambda_1<1$ and $\lambda_N>1$.
\end{remark}

We now give a characterization of the eigenvalues of the normalized Laplacian for hypergraphs, using Rayleigh quotients. Chung (1992) \cite{chung} gave a similar characterization for the graph case, and this was generalized to hypergraphs by Jost and Mulas (2019) \cite{JostMulas2019}. We will follow \cite{JostMulas2019} in the remainder of this subsection.\\

We first define the following scalar product.
\begin{definition}
    Let $C(V)$ denote the vector space of functions $f:V\rightarrow\mathbb{R}$ and, given $f,g\in C(V)$, let
	\begin{equation*}
	\langle f,g\rangle\coloneqq\sum_{v\in V}\deg v\cdot f(v)\cdot g(v).
	\end{equation*}
\end{definition}
We can see the normalized Laplacian $L$ as an operator $C(V)\rightarrow C(V)$ such that
\begin{equation}\label{eq:Lf}
    Lf(v_i)=f(v_i)-\frac{1}{\deg v_i}\sum_{j\neq i}A_{ij}f(v_j).
\end{equation}
Note that $L$ is self-adjoint with respect to $\langle \cdot, \cdot\rangle$, and that $\langle \cdot, \cdot \rangle$ is positive definite.\\

With the Courant-Fischer-Weyl min-max Principle below, we can characterize the eigenvalues of $L$.

\begin{theorem}[Courant-Fischer-Weyl min-max Principle]
\label{min-max theorem}

Let $H$ be an $N$-dimensional vector space with a positive definite scalar product $(\cdot,\cdot)$, and let $A\colon H\rightarrow H$ be a self-adjoint linear operator. Let $\mathcal{H}_{k}$  be the  family of all $k$-dimensional subspaces of $H$.
  Then the eigenvalues $\lambda_{1}\leq \ldots \leq \lambda_{N}$ of
$A$ can be obtained by 
\begin{equation}
\label{min-max}
\lambda_{k}=\min_{H_k\in \mathcal{H}_{k}}\max_{g(\neq 0)\in H_{k}}\frac{(Ag,g)}{(g,g)}=\max_{{H}_{N-k+1} \in \mathcal{H}_{N-k+1}}\min_{g(\neq 0)\in {H}_{N-k+1}}\frac{(Ag,g)}{(g,g)}.
\end{equation}
The vectors $g_k$ realizing such a min-max or max-min then are corresponding
eigenvectors, and the min-max spaces $H_{k}$ are spanned
by the eigenvectors for the eigenvalues $\lambda_1,\dots
,\lambda_k$, and analogously, the max-min spaces $H_{N-k+1}$ are spanned
by the eigenvectors for the eigenvalues $\lambda_k, \dots, \lambda_N$.

Thus, we also have

\begin{equation}
\label{min-max1}
\lambda_{k}=\min_{g(\neq 0)\in H, (g,g_j)=0\ \mathrm{ for }\ j=1,\dots ,k-1}\frac{(Ag,g)}{(g,g)}=\max_{g(\neq 0)\in H, (g,g_\ell)=0\ \mathrm{ for }\ \ell=k+1,\dots ,N}\frac{(Ag,g)}{(g,g)}.
\end{equation}

In particular, 
\begin{equation}
\label{min-max2}
\lambda_{1}=\min_{g(\neq 0)\in H}\frac{(Ag,g)}{(g,g)},\qquad \lambda_N=\max_{g(\neq 0)\in H}\frac{(Ag,g)}{(g,g)}.
\end{equation}
\end{theorem}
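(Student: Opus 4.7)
The plan is to reduce everything to the spectral theorem and a single dimension-counting argument. Since $A$ is self-adjoint with respect to the positive definite product $(\cdot,\cdot)$, the finite-dimensional spectral theorem yields an orthonormal basis $g_1,\ldots,g_N$ of $H$ with $Ag_i=\lambda_i g_i$ and $\lambda_1\leq\ldots\leq\lambda_N$. Writing an arbitrary $g\in H$ as $g=\sum_i c_i g_i$, orthonormality gives
\[
\frac{(Ag,g)}{(g,g)} \;=\; \frac{\sum_i \lambda_i c_i^2}{\sum_i c_i^2},
\]
so the Rayleigh quotient is a convex combination of the $\lambda_i$'s. If only coefficients with indices in $S\subseteq\{1,\ldots,N\}$ are nonzero, the quotient lies in $[\min_{i\in S}\lambda_i,\max_{i\in S}\lambda_i]$, with the extrema attained only by vectors supported in the corresponding extremal eigenspaces.

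For the min-max formula, I would prove both inequalities. For the upper bound $\lambda_k\geq \min_{H_k}\max_{g\in H_k} (Ag,g)/(g,g)$, choose the candidate $H_k=\mathrm{span}(g_1,\ldots,g_k)$; any $g\in H_k$ has $c_{k+1}=\ldots=c_N=0$, so by the previous paragraph its Rayleigh quotient is at most $\lambda_k$, with equality at $g=g_k$. For the reverse inequality $\lambda_k\leq \min_{H_k}\max_{g\in H_k}(Ag,g)/(g,g)$, take any $k$-dimensional subspace $H_k$; since $\dim H_k + \dim\mathrm{span}(g_k,\ldots,g_N)=k+(N-k+1)=N+1$, the intersection $H_k\cap \mathrm{span}(g_k,\ldots,g_N)$ is nontrivial, and any nonzero vector in it has Rayleigh quotient $\geq \lambda_k$. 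The max-min formula is handled symmetrically, swapping the roles of $\mathrm{span}(g_1,\ldots,g_{k})$ and $\mathrm{span}(g_k,\ldots,g_N)$.

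To obtain \eqref{min-max1}, I would specialize: the constraint $(g,g_j)=0$ for $j<k$ cuts $H$ down to $\mathrm{span}(g_k,\ldots,g_N)$, on which $A$ still acts self-adjointly with eigenvalues $\lambda_k,\ldots,\lambda_N$, so applying the first identity in \eqref{min-max} at index $1$ of this restricted operator gives the minimum characterization; the analogous restriction to $\mathrm{span}(g_1,\ldots,g_k)$ yields the maximum characterization. Equations in \eqref{min-max2} are the cases $k=1$ and $k=N$ of \eqref{min-max1}. Finally, the claim that the extremizing vectors are eigenvectors for $\lambda_k$ and that the optimal subspaces are spanned by the first $k$ or last $N-k+1$ eigenvectors follows from the equality case analysis in the convex-combination bound: a Rayleigh quotient of $\lambda_k$ on $\mathrm{span}(g_1,\ldots,g_k)$ forces the vector to lie in the $\lambda_k$-eigenspace intersected with that subspace.

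The main subtlety I anticipate is handling degenerate eigenvalues: when $\lambda_{k-1}=\lambda_k$ or $\lambda_k=\lambda_{k+1}$, the optimizing subspaces $H_k$ are not unique, and the statement that they are ``spanned by eigenvectors for $\lambda_1,\ldots,\lambda_k$'' must be interpreted as: spanned by \emph{some} choice of such eigenvectors within the relevant eigenspaces. This is a bookkeeping issue rather than a genuine obstacle, and it is resolved by noting that any orthonormal eigenbasis may be used in the spectral decomposition.
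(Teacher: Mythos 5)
Your proof is correct. The paper states this classical theorem as background without giving any proof of its own, so there is nothing to compare against; your argument is the standard one (spectral theorem, Rayleigh quotient as a convex combination of eigenvalues, the test subspace $\mathrm{span}(g_1,\ldots,g_k)$ for one inequality and the dimension count $k+(N-k+1)=N+1$ for the other), and your remark about non-uniqueness of the optimizing subspaces when eigenvalues repeat is the right caveat. No gaps.
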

\begin{definition}\label{RQ}
$(Ag,g)/(g,g)$ is called the \emph{Rayleigh quotient} of $g$. 
\end{definition}

Now, for $f\in C(V)$, we let
\begin{equation*}
    \RQ(f) \coloneqq \frac{\langle Lf,f\rangle}{\langle f,f \rangle} = \frac{\sum_{e\in E}\biggl(\sum_{v\in e \text{ input}} f(v) - \sum_{w\in e \text{ output}}f(w)\biggr)^2}{\sum_{v\in V}\deg v f(v)^2}.
\end{equation*}
We can easily see from the expression of the Rayleigh quotient $\RQ(f)$, in combination with Equation \eqref{min-max2}, that
\[
\lambda_1 = \min_{f\neq 0}\RQ(f) \geq 0.
\]
Chung (1992) \cite{chung} showed that $\lambda_1=0$ in the graph case. However, in the hypergraph case, this is not necessarily true. For example, we saw in Remark \ref{rmk:A=0} that $\lambda_1$ can be equal to $1$.

Furthermore, in the graph case, we have that the largest eigenvalue satisfies $\lambda_N \leq 2$ \cite{chung}. This was generalized to hypergraphs by Mulas (2021) \cite{Mulas2021}, as follows.
\begin{proposition}\label{prop:eigenvalues}
    For every connected, oriented hypergraph $\Gamma$,
    \begin{align*}
        \lambda_N \leq \max_{e\in E}|e|.
    \end{align*}
    Furthermore, the inequality is sharp if and only if $\Gamma$ is bipartite (Definition \ref{def:bipartite}) and $\Gamma$ is a $c$-uniform hypergraph.
\end{proposition}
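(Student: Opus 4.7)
The plan is to use the Rayleigh-quotient characterization $\lambda_N=\max_{f\neq 0}\RQ(f)$ together with Cauchy–Schwarz applied edgewise. Let $c_{\max}:=\max_{e\in E}|e|$. For each edge $e$ and each $f\in C(V)$, set
\[
g_e(f):=\sum_{v\in \inp(e)}f(v)-\sum_{w\in \out(e)}f(w).
\]
Writing $g_e(f)$ as a sum of $|e|$ signed values $\pm f(v)$, the elementary inequality $(a_1+\cdots+a_k)^2\le k(a_1^2+\cdots+a_k^2)$ yields
\[
g_e(f)^2\le |e|\sum_{v\in e}f(v)^2\le c_{\max}\sum_{v\in e}f(v)^2.
\]
Summing over $e$ and swapping the order of summation in the standard way turns $\sum_e\sum_{v\in e}f(v)^2$ into $\sum_v\deg(v)f(v)^2$, which is precisely the denominator of $\RQ(f)$. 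Dividing yields $\RQ(f)\le c_{\max}$ for every $f\neq 0$, hence $\lambda_N\le c_{\max}$.

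For the sufficiency of bipartiteness plus $c$-uniformity, assume $\Gamma$ is $c$-uniform with bipartition $V=V_1\sqcup V_2$, and take the test function $f(v)=+1$ if $v\in V_1$, $f(v)=-1$ if $v\in V_2$. On every edge the contributions from inputs and from outputs have opposite signs in $f$ because the bipartition respects the orientation; hence $g_e(f)^2=(|\inp(e)|+|\out(e)|)^2=c^2$. Summing gives a numerator of $Mc^2$ while the denominator is $\sum_v\deg(v)=\sum_e|e|=Mc$, so $\RQ(f)=c$. Combined with the upper bound, this forces $\lambda_N=c=c_{\max}$.

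For the necessity direction I would start from an eigenvector $f$ with $\RQ(f)=\lambda_N=c_{\max}$ and analyze when the above chain of inequalities is tight. The Cauchy–Schwarz step is an equality on edge $e$ iff the signed values $\pm f(v)$ coincide for all $v\in e$, i.e.\ iff $f$ is constant in absolute value on $e$ with signs determined by the orientation; and the bound $|e|\le c_{\max}$ is used sharply iff either $|e|=c_{\max}$ or $f$ vanishes on $e$. To promote this into $c$-uniformity, I would propagate the vanishing through the connected hypergraph: if $f(v_0)=0$ for some $v_0$, then on every edge $e\ni v_0$ the equality conditions force $f\equiv 0$ on $e$ (regardless of whether $|e|=c_{\max}$ or $|e|<c_{\max}$), and walking along a hyperedge-path in the connected $\Gamma$ spreads $f\equiv 0$ to every vertex, contradicting $f$ being an eigenvector. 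So $f$ is nowhere zero, which forces $|e|=c_{\max}$ on every edge, giving $c$-uniformity with $c=c_{\max}$.

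It remains to extract the bipartition. With $c$-uniformity and $f$ nowhere vanishing, the Cauchy–Schwarz equality condition on each edge $e$ provides a constant $\alpha_e\neq 0$ with $f(v)=\alpha_e$ for $v\in\inp(e)$ and $f(v)=-\alpha_e$ for $v\in\out(e)$. Shared vertices of incident edges then force $|\alpha_e|$ to be the same constant $\alpha$ throughout the connected hypergraph. Setting $V_1:=\{v:f(v)=+\alpha\}$ and $V_2:=\{v:f(v)=-\alpha\}$ yields a partition of $V$ such that every edge has either inputs in $V_1$ and outputs in $V_2$ (when $\alpha_e=\alpha$) or vice versa (when $\alpha_e=-\alpha$), which is exactly Definition~\ref{def:bipartite}. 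The main obstacle is the propagation step in the necessity direction: one has to rule out $f$ having partial support without assuming that the eigenvector is componentwise nonzero a priori, and the key observation is that the edgewise equality conditions make the zero set of $f$ a union of connected components in the hypergraph sense, which only the connectedness assumption can collapse.
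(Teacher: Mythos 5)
The paper does not prove this proposition; it is quoted from Mulas (2021) \cite{Mulas2021} without proof, so there is no in-paper argument to compare against. Your proof is correct and complete: the edgewise Cauchy--Schwarz bound $g_e(f)^2\le |e|\sum_{v\in e}f(v)^2\le c_{\max}\sum_{v\in e}f(v)^2$ followed by the double-counting identity $\sum_e\sum_{v\in e}f(v)^2=\sum_v\deg(v)f(v)^2$ gives the upper bound, the $\pm 1$ test function on the bipartition gives sufficiency, and your equality analysis (tightness forces $f$ to be $\alpha_e$ on inputs and $-\alpha_e$ on outputs of each edge, the zero set of $f$ propagates along edges and is killed by connectedness, whence uniformity and the sign bipartition) correctly handles necessity. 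This is the standard Rayleigh-quotient argument for such statements and matches the approach of the cited source.
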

Mulas et al.\ (2022) \cite{mulas2022hypergraphs} also proved the following proposition, relating the normalized Laplacian spectrum of a bipartite hypergraph (Definition \ref{def:bipartite}) and its underlying hypergraph (Definition \ref{def:underlying}).
\begin{proposition}
    If $\Gamma$ is bipartite, then it is isospectral to its underlying hypergraph.
\end{proposition}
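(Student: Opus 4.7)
The plan is to exhibit an explicit similarity between $L(\Gamma)$ and $L(\Gamma^+)$ via a diagonal sign change that reflects the bipartition. Let $V = V_1 \sqcup V_2$ be the bipartition guaranteed by Definition \ref{def:bipartite}, and define the diagonal $N\times N$ matrix
\[
S \coloneqq \diag(s_1,\ldots,s_N), \qquad s_i \coloneqq \begin{cases} +1 & \text{if } v_i\in V_1,\\ -1 & \text{if } v_i\in V_2. \end{cases}
\]
Since $S^2 = \id$ and $S$ is diagonal, $S$ commutes with $D^{-1}$, so the claim will follow once we show $A(\Gamma^+) = S A(\Gamma) S$, because then
\[
L(\Gamma^+) = \id - D^{-1} A(\Gamma^+) = S\bigl(\id - D^{-1} A(\Gamma)\bigr) S = S L(\Gamma) S,
\]
and $L(\Gamma)$ and $L(\Gamma^+)$ are similar via the orthogonal involution $S$.

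The first key step is a case analysis of the entries of $A(\Gamma)$ using the bipartite condition. Fix $i \neq j$ and an edge $e$ containing both $v_i$ and $v_j$. By Definition \ref{def:bipartite}, all inputs of $e$ lie in one part and all outputs in the other, so: if $v_i,v_j$ lie in the same part $V_k$, they must be co-oriented in $e$; if they lie in different parts, they must be anti-oriented in $e$. Consequently, with $n_{ij} \coloneqq |\{e\in E \colon v_i,v_j\in e\}|$, one obtains
\[
A(\Gamma)_{ij} = \begin{cases} -n_{ij} & \text{if } v_i,v_j \text{ lie in the same part},\\ +n_{ij} & \text{if } v_i,v_j \text{ lie in different parts}. \end{cases}
\]

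The second step is to compute $A(\Gamma^+)$ directly from Definition \ref{def:matrices}. In $\Gamma^+$, every incidence has value $+1$, so any two vertices lying in a common edge are co-oriented there and never anti-oriented, giving $A(\Gamma^+)_{ij} = -n_{ij}$ for $i\neq j$ (and zero on the diagonal). Combining the two steps, $(SAS)_{ij} = s_i s_j A(\Gamma)_{ij}$ equals $-n_{ij}$ in both cases: when $v_i,v_j$ are in the same part we have $s_is_j = 1$ and $A(\Gamma)_{ij} = -n_{ij}$; when they are in different parts, $s_is_j=-1$ and $A(\Gamma)_{ij}=+n_{ij}$. Hence $SA(\Gamma)S = A(\Gamma^+)$, which together with the displayed identity above yields $L(\Gamma^+)=SL(\Gamma)S$ and so the two spectra coincide.

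The proof is essentially a bookkeeping argument; the only nontrivial point is the sign analysis, which is forced entirely by the bipartite definition. An alternative, and perhaps more conceptual, approach would be to use the Courant–Fischer characterization in Theorem \ref{min-max theorem}: the bijection $f \mapsto \tilde f$ on $C(V)$ defined by $\tilde f(v) = s_v f(v)$ preserves the denominator of $\RQ$ and, under the bipartite condition, transforms the signed sum $\sum_{v\in e,\,\mathrm{input}} f - \sum_{w\in e,\,\mathrm{output}} f$ into $\pm \sum_{v\in e} \tilde f(v)$, so that $\RQ_\Gamma(f) = \RQ_{\Gamma^+}(\tilde f)$; the min-max formulas then force equality of all eigenvalues. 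Either route works, but the similarity argument is crisper and avoids invoking the variational characterization.
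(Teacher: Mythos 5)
Your proof is correct: the sign analysis forced by Definition \ref{def:bipartite} does give $A(\Gamma^+)=SA(\Gamma)S$ with $S$ the bipartition signature, and since the degree matrices of $\Gamma$ and $\Gamma^+$ coincide, $L(\Gamma^+)=SL(\Gamma)S$ follows. The paper states this proposition as a cited result from \cite{mulas2022hypergraphs} without reproducing a proof, but the diagonal switching similarity you use is exactly the standard argument for it, so there is nothing to flag.
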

\begin{remark}\label{rmk:lambdaN=c}
    If $\Gamma$ is an oriented hypergraph such that all vertices are inputs in all edges they belong to (or equivalently, $\Gamma$ is the underlying hypergraph of some oriented hypergraph), then by Proposition \ref{prop:eigenvalues} its largest eigenvalue $\lambda_N$ equals $\max_{e\in E}|e|$ if and only if $\Gamma$ is $c$-uniform. In this case, as shown in \cite{Mulas2021}, the multiplicity of the eigenvalue $\lambda_N=c$ equals the number of connected components, and the corresponding eigenfunctions are those functions that are constant on any connected component.
\end{remark}

\subsection{Coloring hypergraphs}\label{section:overview-colorings}
We now provide an overview of the different notions of coloring considered in this paper, most of which were introduced earlier in the introduction. For each notion, the corresponding coloring number is defined as the minimum number of colors needed to color the hypergraph  according to that notion.

We first define a proper vertex coloring for a graph, and then generalize this to hypergraphs in different ways.
\begin{definition}
    Let $G=(V,E)$ be a graph. A \emph{(vertex) $k$-coloring} is a function $c:V\to \{1,\ldots,k\}$, and it is \emph{proper} if $v\sim w$ (i.e., $\{v,w\}\in E$) implies that $c(v)\neq c(w)$. The corresponding coloring number is denoted by $\chi\coloneqq \chi(G)$.
\end{definition}
We now define a proper strong $k$-coloring for the vertices of an oriented hypergraph.
\begin{definition}\label{def:coloring1}
    Let $\Gamma$ be an oriented hypergraph. A \emph{proper strong (vertex) $k$-coloring} is a function $c\colon V(\Gamma)\to \{1,\ldots,k\}$ such that, for every edge $e$ and any two vertices $v_1,v_2\in e$, we have $c(v_1)\neq c(v_2)$. The corresponding coloring number is denoted by $\chi\coloneqq \chi(\Gamma)$.
\end{definition}
We shall consider the notion of strong coloring in Section \ref{sec:strong}, while in Section \ref{sec:d-proper} we shall consider the following generalization.
\begin{definition}\label{def:d-proper1}
    Let $\Gamma$ be an oriented hypergraph and let $d\geq 1$. A (vertex) $k$-coloring $c\colon V(\Gamma)\to \{1,\ldots,k\}$ is \emph{$d$-proper} if every edge contains at most $d$ vertices of every color. The corresponding coloring number is denoted by $\chi_d\coloneqq \chi_d(\Gamma)$.
\end{definition}

In particular, a proper strong $k$-coloring is a $d$-proper $k$-coloring with $d=1$.\\

We also introduce and study the concept of $q$-tailored coloring, which will be the focus of Subsection \ref{subsec:q-tailored}. In the setting of $c$-uniform hypergraphs in which all vertices are inputs in all edges they belong to, $q$-tailored colorings generalize $d$-proper colorings. 
\begin{definition}\label{def:q-tailored1}
    Let $\Gamma$ be a $c$-uniform hypergraph in which all vertices are inputs in all edges they belong to, and let $q\in [0,c-1]$. A (vertex) $k$-coloring $c\colon V(\Gamma)\to \{1,\ldots,k\}$ that has coloring classes $V_1,\ldots,V_k$ is \emph{$q$-tailored} if, for all $i=1,\ldots,k$ and for all $v\in V_i$, we have
    \begin{equation}
        \sum_{w\in V_i}|A_{v,w}|\leq q \deg v.
    \end{equation}
    The corresponding coloring number is denoted by $\chi^{q\text{-t}}\coloneqq \chi^{q\text{-t}}(\Gamma)$.
\end{definition}

In particular, if $\Gamma$ is a $c$-uniform hypergraph in which every vertex is an input in all edges it belongs to, then any $d$-proper coloring is also a $(d-1)$-tailored coloring. This is why $q$-tailored colorings generalize $d$-proper colorings in this setting.\\

Informally, a $q$-tailored coloring ensures that no vertex is too connected to others of the same color: the average number of same-colored neighbors of any vertex $v\in V$ in an edge is at most a fraction $q$ of its degree.\\

Furthermore, in Section \ref{sec:d-improper}, we shall focus on the following generalization of the strong coloring number for graphs.
\begin{definition}\label{def:d-improper1}
    A vertex coloring $c\colon V\to \{1,\ldots,k\}$ is \emph{$d$-improper} if every vertex $v$ has at most $d$ neighbors with the same color as $v$. The \emph{$d$-improper coloring number} of $G$, denoted $\chi^d=\chi^d(G)$, is the minimal number of colors needed to color the vertices $d$-improperly. The corresponding coloring number is denoted by $\chi^d\coloneqq \chi^d(G)$.
\end{definition}
Finally, we shall consider edge colorings of hypergraphs, which are defined as follows and will be the focus of Section \ref{sec:edgecol}.
\begin{definition}\label{def:edge-coloring}
    Let $\Gamma=(V,E,\varphi)$ be an oriented hypergraph. A \emph{proper strong $k$-edge-coloring} is a function $c'\colon E\to \{1,\ldots,k\}$ such that $c'(e_1) = c'(e_2)\implies e_1\cap e_2=\varnothing$. The \emph{strong edge coloring number} $\chi'\coloneqq \chi'(\Gamma)$ is the minimum number of colors needed to strongly color the edges properly.
\end{definition}


\section{Eigenvalue bounds for strong hypergraph vertex coloring}\label{sec:strong}
\subsection{Strong vertex coloring of general oriented hypergraphs}
We again fix an oriented hypergraph $\Gamma = (V, E, \varphi)$.
We first present an alternative proof of the following result, which was first proven by Abiad, Mulas and Zhang (2020) \cite[Corollary 5.4]{Abiad20}. 
\begin{theorem}\label{thm:cor5.4AMZ}
    Let $\Gamma$ be an oriented hypergraph and let $\chi(\Gamma)$ be defined as in Definition \ref{def:coloring1}. Then, we have that
    \[
    \chi(\Gamma) \geq \frac{\lambda_N - \lambda_1}{\min\bigl\{ \lambda_N-1, 1-\lambda_1 \bigr\}}.
    \]
\end{theorem}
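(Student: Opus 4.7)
My plan is to restrict $L$ to a subspace built from an eigenfunction by splitting it along the color classes, then extract the bound from a Rayleigh--Ritz trace identity together with the min-max characterization (Theorem~\ref{min-max theorem}). Throughout I will assume $A\neq\mathbf 0$; otherwise Remark~\ref{rmk:A=0} gives $\lambda_1=\lambda_N=1$ and the right-hand side of the claim is a vacuous $0/0$, with the statement being trivially true.

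Fix a proper strong $\chi$-coloring with color classes $V_1,\dots,V_\chi$, and let $\phi$ be an eigenfunction of $L$ with eigenvalue $\lambda_N$. For each $i$ let $\phi_i\in C(V)$ be the restriction of $\phi$ to $V_i$, extended by zero off $V_i$, so that $\phi=\sum_{i=1}^\chi\phi_i$. Set $n:=\#\{i:\phi_i\neq 0\}$ and $W:=\mathrm{span}\{\phi_i:\phi_i\neq 0\}$. Because the $\phi_i$ have pairwise disjoint supports, they are pairwise orthogonal with respect to $\langle\cdot,\cdot\rangle$, so $\dim W=n\leq\chi$.

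The crucial computation, and the only place where the strong-coloring hypothesis is used, is the identity $\RQ(\phi_i)=1$ for every non-zero $\phi_i$. Indeed, because every edge $e$ of $\Gamma$ meets $V_i$ in at most one vertex, the inner sum appearing in the numerator of $\RQ(\phi_i)$ collapses to either $\pm\phi(v_\ast)$ or $0$; squaring and summing over edges yields $\sum_{v\in V_i}\deg(v)\,\phi(v)^2=\langle\phi_i,\phi_i\rangle$, which matches the denominator. Consequently, in the $\langle\cdot,\cdot\rangle$-orthonormal basis $\{\phi_i/\|\phi_i\|\}$ of $W$, the Rayleigh--Ritz matrix of $L$ restricted to $W$ has every diagonal entry equal to $1$, so its eigenvalues $\theta_1\leq\cdots\leq\theta_n$ satisfy $\theta_1+\cdots+\theta_n=n$.

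To finish, observe that $\phi\in W$ together with $L\phi=\lambda_N\phi$ forces $\theta_n=\lambda_N$, whence $\sum_{k<n}\theta_k=n-\lambda_N$ and therefore $\theta_1\leq(n-\lambda_N)/(n-1)$. The min-max principle gives $\theta_1\geq\lambda_1$, so $n(1-\lambda_1)\geq\lambda_N-\lambda_1$; since $n\leq\chi$, one half of the bound follows:
\[
\chi\geq\frac{\lambda_N-\lambda_1}{1-\lambda_1}.
\]
Running the identical argument with a $\lambda_1$-eigenfunction in place of $\phi$ produces $\theta_1=\lambda_1$ and $\theta_n\geq(n-\lambda_1)/(n-1)$, yielding symmetrically $\chi\geq(\lambda_N-\lambda_1)/(\lambda_N-1)$. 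Taking the larger of the two gives the claim. The one delicate point that needs attention is the case $n=1$, which would force $\lambda_N=\RQ(\phi_{i_0})=1$, contradicting $\lambda_N>1$ from Remark~\ref{rmk:A=0} once $A\neq\mathbf 0$; hence $n\geq 2$ in the nontrivial case and the division by $n-1$ is legitimate.
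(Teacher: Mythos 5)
Your proof is correct, but it takes a genuinely different route from the paper's. The paper works directly inside the Rayleigh quotient of the extreme eigenfunction: it splits the off-diagonal sum $\sum_{v,w}A_{v,w}g(v)g(w)$ into pairwise cross-class contributions $S^g_{ij}$, selects an extremal pair $(i,j)$ by an averaging step, and compares the resulting two-class, sign-flipped test function $g_{ij}$ against the opposite extreme eigenvalue via the min-max principle. You instead compress $L$ to the subspace $W$ spanned by the color-class restrictions $\phi_i$ of the eigenfunction, use the strong-coloring hypothesis only to force every diagonal Rayleigh quotient to equal $1$ (hence $\tr = n$), note that $\lambda_N$ (resp.\ $\lambda_1$) is pinned as an eigenvalue of the compression because $\phi\in W$, and finish by interlacing plus the trace identity. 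All the steps check out: the $\phi_i$ are $\langle\cdot,\cdot\rangle$-orthogonal, $\RQ(\phi_i)=1$ is exactly the computation the paper makes implicitly when it notes $\supp(g)\subseteq V_1$ would force $\RQ(g)=1$, the interlacing bounds $\lambda_1\leq\theta_1$ and $\theta_n\leq\lambda_N$ are legitimate for the self-adjoint compression, and your handling of $n=1$ and of $A=\mathbf 0$ matches the paper's Convention \ref{conv:1}. What each approach buys: yours is shorter and closer in spirit to the Elphick--Wocjan/quotient-matrix proofs, whereas the paper's explicit test functions $g_{ij}$ are precisely what feed into Propositions \ref{prop:1-lambda1} and \ref{prop:lambdaN-1} (in the equality case every $g_{ij}$ becomes an eigenfunction of the opposite extreme eigenvalue, giving the multiplicity lower bound $\chi-1$); to recover those consequences from your argument you would have to analyze when the trace and interlacing inequalities are simultaneously tight, which is doable but less immediate.
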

Our proof relies on Rayleigh quotients and leads to new consequences, which are detailed in Proposition \ref{prop:1-lambda1} and \ref{prop:lambdaN-1}. These results show that, if equality holds in the bound from Theorem \ref{thm:cor5.4AMZ}, then there must be a specific relation between the eigenspaces corresponding to $\lambda_1$ and $\lambda_N$. To better understand the structure of such hypergraphs, one needs to know these eigenspaces. This is why, to gain further insight, in Subsection \ref{sec:uniformhypergraphs}, we shall focus on $c$-uniform unoriented hypergraphs, for which the eigenspace associated with $\lambda_N=c$ is known to consist of constant functions.\\

We need the following:
\begin{convention}\label{conv:1}
We recall that, by Remark \ref{rmk:A=0}, we have $\lambda_1=1$ if and only if $\lambda_N=1$. We use the convention that, in this case, 

\[
\frac{\lambda_N-\lambda_1}{\lambda_N-1} = \frac{\lambda_N-\lambda_1}{1-\lambda_1} = 1.
\]
\end{convention}

\begin{proof}[Alternative proof of Theorem \ref{thm:cor5.4AMZ}]
    We first consider the case where $\lambda_1 = \lambda_N = 1$. By Convention \ref{conv:1}, in this case, the inequality in the statement becomes
    \[
    \chi(\Gamma)\geq 1,
    \]
    which is trivially true.

    Now assume that $\lambda_1<1$ and hence, by Remark \ref{rmk:A=0}, $\lambda_N > 1$.
    We first show that $$\chi(\Gamma)\geq \frac{\lambda_N-\lambda_1}{1-\lambda_1}.$$ Fix a proper $\chi$-coloring, and let $V_1,\ldots,V_\chi$ be the corresponding coloring classes. Furthermore, let $g$ be an eigenfunction for $\lambda_N$, so that $\RQ(g)=\lambda_N$.
    Let also $l\leq \chi$ be such that
    \[
    \supp(g)\subseteq \bigcup_{i=1}^lV_i,
    \]
    where we rearrange the coloring classes if necessary, to ensure that $l$ is as small as possible. Note that $l\geq2$, because, if $\supp(g)\subseteq V_1$, then $\RQ(g)= 1$.

    Now define, for $1\leq i<j\leq  l$,
    \begin{equation}\label{eq:S_ij}
    S_{ij}^g\coloneqq  \sum_{v_i\in V_i, v_j\in V_j}A_{v_i,v_j}g(v_i)g(v_j).
    \end{equation}
    We have that
    \begin{align*}
        \lambda_N &= \RQ(g) \\&= 1 - 2\ddfrac{\sum_{v,w\in V}A_{v,w}\cdot g(v)\cdot g(w)}{\sum_{v\in V}\deg v\cdot g(v)^2}\\
        &= 1 - 2\ddfrac{\sum_{1\leq i<j\leq  l}\sum_{v_i\in V_i,v_j\in V_j}A_{v_i,v_j}\cdot g(v_i)\cdot g(v_j)}{\frac1{ l-1}\sum_{1\leq i< j\leq  l}\sum_{v\in V_i\cup V_j}\deg v \cdot g(v)^2}\\
        &\leq 1 + 2\ddfrac{\sum_{1\leq i<j\leq  l}\max\biggl\{-S^g_{ij},0\biggr\}}{\frac1{ l-1}\sum_{1\leq i< j\leq  l}\sum_{v\in V_i\cup V_j}\deg v \cdot g(v)^2}.
    \end{align*}
    Note that, since  $\lambda_N>1$, we must have that $S^g_{ij}< 0$ for some $i,j$. Now we let, for all $i,j$,
     \begin{equation}\label{eq:g_ij}
     g_{ij}(v)\coloneqq \begin{cases}
        g(v), &\text{ if } v\in V_i,\\
        -g(v), &\text{ if } v\in V_j,\\
        0, &\text{ otherwise.}
    \end{cases}
    \end{equation}
    and we assume without loss of generality that
    \[
    \RQ(g_{12}) = \min_{1\leq i<j\leq  l} \RQ( g_{ij}).
    \]
    Note that $S_{12}^g<0$.
    Then,
    \begin{align*}
        \lambda_N &\leq 1 + 2\ddfrac{\sum_{1\leq i<j\leq  l}\max\biggl\{-S^g_{ij},0\biggr\}}{\frac1{ l-1}\sum_{1\leq i< j\leq  l}\sum_{v\in V_i\cup V_j}\deg v \cdot g(v)^2}\\
        &\leq 1 + 2( l-1)\ddfrac{\bigl|S^g_{12}\bigr|}{\sum_{v\in V_1\cup V_2}\deg v \cdot g(v)^2}\\
        &\overset{(i)}{=} 1+\bigl( l-1\bigr)\bigl(1-\RQ(g_{12})\bigr).
    \end{align*}
    To prove $(i)$, recall that $S_{12}^g<0$, and note that
    \begin{align*}
        \RQ\bigl(g_{12}\bigr) &= 1 -2\frac{\sum_{\substack{v\in V_1\\ w\in V_2}}A_{v,w}\cdot g(v)\cdot -g(w)}{\sum_{v\in V_1\cup V_2}\deg v\cdot g(v)^2}\\
        &= 1-2\frac{\bigl|S_{12}^g\bigr|}{\sum_{v\in V_1\cup V_2}\deg v\cdot g(v)^2}.
    \end{align*}
    From the equality $\lambda_N\leq 1+(l-1)(1-\RQ(g_{12}))$, it follows that
    \begin{equation*}
        \lambda_N \leq 1+\bigl( l-1\bigr)\bigl(1-\lambda_1\bigr),
    \end{equation*}
     which is equivalent to
    \begin{equation}\label{eq:inequality1.1}
        \frac{\lambda_N-\lambda_1}{1-\lambda_1} \leq l \leq \chi.
    \end{equation}
    The proof of the inequality
    \[
    \chi(\Gamma) \geq \frac{\lambda_N-\lambda_1}{\lambda_N-1}
    \]
    is analogous. In particular, let $h$ be an eigenfunction such that $\RQ(h) = \lambda_1$, and let $k\leq \chi$ be such that
    \[
    \supp(h)\subseteq \bigcup_{i=1}^kV_i,
    \]
    where we rearrange the coloring classes if necessary, to ensure that $k$ is as small as possible. Note that $k\geq2$, because, if $\supp(h)\subseteq V_1$, then $\RQ(h)= 1$.

    We define, for all $1\leq i<j\leq k$, the functions
    \begin{equation}\label{eq:h_ij}
     h_{ij}(v)\coloneqq \begin{cases}
        h(v), &\text{ if } v\in V_i,\\
        - h(v), &\text{ if } v\in V_j,\\
        0, &\text{ otherwise.}
    \end{cases}
    \end{equation}
    We assume without loss of generality that
    \[
    \RQ( h_{12}) = \max_{1\leq i<j\leq k}\RQ(h_{ij}),
    \]
    and we let $S^h_{ij}$ be defined as in \cref{eq:S_ij}. We see that
    \begin{align*}
        \lambda_1 &= \RQ(h) \\&= 1 - 2\ddfrac{\sum_{v,w\in V}A_{v,w}\cdot h(v)\cdot h(w)}{\sum_{v\in V}\deg v\cdot h(v)^2}\\
        &= 1 - 2\ddfrac{\sum_{1\leq i<j\leq  k}\sum_{v_i\in V_i,v_j\in V_j}A_{v_i,v_j}\cdot h(v_i)\cdot h(v_j)}{\frac1{ k-1}\sum_{1\leq i< j\leq  k}\sum_{v\in V_i\cup V_j}\deg v \cdot h(v)^2}\\
        &\geq 1 - 2\ddfrac{\sum_{1\leq i<j\leq  k}\max\bigl\{S^h_{ij},0\bigr\}}{\frac1{ k-1}\sum_{1\leq i< j\leq  k}\sum_{v\in V_i\cup V_j}\deg v \cdot h(v)^2}\\
        &\geq 1 - 2( k-1)\ddfrac{S_{12}^h}{\sum_{v\in V_1\cup V_2}\deg v \cdot h(v)^2}\\
        &\overset{(ii)}{=} 1-\bigl( k-1\bigr)\bigl(\RQ(h_{12})-1\bigr).
    \end{align*}     
        
    To prove $(ii)$, we use the fact that $S_{12}^h>0$ (since $\RQ(h)<1$), and that
    \begin{align*}
        \RQ\bigl(h_{12}\bigr) &= 1 -2\frac{\sum_{\substack{v\in V_1\\ w\in V_2}}A_{v,w}\cdot h(v)\cdot -h(w)}{\sum_{v\in V_1\cup V_2}\deg v\cdot h(v)^2}\\
        &= 1+2\frac{S_{12}^h}{\sum_{v\in V_1\cup V_2}\deg v\cdot h(v)^2}.
    \end{align*}
    By assumption, it now follows that
    \begin{equation*}
    \begin{aligned}
        \lambda_1&\geq 1-( k-1)(\lambda_N-1) \\
        &=  k(1-\lambda_N)+\lambda_N,
    \end{aligned}
    \end{equation*}
    i.e.,
    \begin{equation}\label{eq:inequality2.2}
        \chi \geq k \geq \frac{\lambda_N-\lambda_1}{\lambda_N-1}.
    \end{equation}
    This concludes the proof.
\end{proof}

Note that we can rewrite the inequality in \eqref{eq:inequality1.1} to see that
\begin{equation}
\lambda_1 \leq \frac{\chi-\lambda_N}{\chi-1}.
\end{equation}
Similarly, we can rewrite the inequality in \eqref{eq:inequality2.2} to obtain
\begin{equation}
    \lambda_N\geq \frac{\chi-\lambda_1}{\chi-1}.
\end{equation}
The following two propositions are immediate consequences of the proof of Theorem \ref{thm:cor5.4AMZ}. They show that, if either inequality \eqref{eq:inequality1.1} or \eqref{eq:inequality2.2} is an equality for a given hypergraph, then there exists a specific relation between the eigenspaces of $\lambda_1$ and $\lambda_N$. Moreover, in such cases, at least one of the eigenvalues $\lambda_1$ or $\lambda_N$ must have multiplicity at least $\chi- 1$.
\begin{proposition}\label{prop:1-lambda1}
    Let $\Gamma$ be an oriented hypergraph such that $A(\Gamma)\neq \boldsymbol 0$ and $$\chi=\frac{\lambda_N-\lambda_1}{1-\lambda_1}.$$ Then, the following hold:
    \begin{enumerate}
    \item For every eigenfunction $g$ with eigenvalue $\lambda_N$, and for all proper $\chi$-colorings with corresponding coloring classes $V_1,\ldots,V_\chi$, we have
    \[
    \supp(g) \cap V_i \neq \varnothing \text{ for all } 1\leq i\leq \chi.
    \]
    \item For every eigenfunction $g$ with eigenvalue $\lambda_N$, and for all $1\leq i<j\leq \chi$, we have that $S^g_{ij}<0$ (recall $S^g_{ij}$ from Equation \eqref{eq:S_ij}).
    \item For every eigenfunction $g$ with eigenvalue $\lambda_N$, we have that $$\RQ(g_{ij}) = \RQ(g_{12})=\lambda_1 = \frac{\chi-\lambda_N}{\chi-1}$$ for all $1\leq i<j\leq \chi$. Hence, the functions $g_{ij}$ from Equation \eqref{eq:g_ij} for $i\neq j$ are eigenfunctions with eigenvalue $\lambda_1$.
\end{enumerate}
Hence, the multiplicity of $\lambda_1 = (\chi-\lambda_N)/(\chi-1)$ equals at least $\chi-1$, and this inequality is strict if $m_\Gamma(\lambda_N)>1$.
\end{proposition}

\begin{proposition}\label{prop:lambdaN-1}
     Let $\Gamma$ be an oriented hypergraph such that $A(\Gamma)\neq \boldsymbol 0$ and  $$\chi=\frac{\lambda_N-\lambda_1}{\lambda_N-1}.$$ Then, the following hold:
    \begin{enumerate}
    \item For every eigenfunction $h$ with eigenvalue $\lambda_1$, for all proper $\chi$-colorings with corresponding coloring classes $V_1,\ldots,V_\chi$, we have
    \[
    \supp(h) \cap V_i \neq \varnothing \text{ for all } 1\leq i\leq \chi.
    \]
    \item For every eigenfunction $h$ with eigenvalue $\lambda_1$, and for all $1\leq i<j\leq \chi$, we have that $S^h_{ij}\geq 0$.
    \item For every eigenfunction $h$ with eigenvalue $\lambda_1$, we have that $$\RQ(h_{ij}) = \RQ(h_{12})=\lambda_N = \frac{\chi-\lambda_1}{\chi-1}$$ for all $1\leq i<j\leq \chi$. Hence, the functions $h_{ij}$ from Equation \eqref{eq:h_ij} for $i\neq j$ are eigenfunctions with eigenvalue $\lambda_N$.
\end{enumerate}
Hence, the multiplicity of $\lambda_N = (\chi-\lambda_1)/(\chi-1)$ equals at least $\chi-1$, and this inequality is strict if $m_\Gamma(\lambda_1)>1$.
\end{proposition}

As we mentioned earlier,  without additional information about the eigenspaces of $\lambda_1$ or $\lambda_N$, 
it is not possible to derive more specific properties of hypergraphs that attain equality in Theorem \ref{thm:cor5.4AMZ} beyond those already given in Propositions \ref{prop:1-lambda1} and \ref{prop:lambdaN-1}.

However, for $c$-uniform unoriented hypergraphs,
the eigenspace associated with $\lambda_N$ is known. This allows us to deduce explicit structural properties of such hypergraphs, specifically regarding how vertices from different coloring classes are distributed across the edges.

\subsection{Strong vertex coloring of unoriented \texorpdfstring{$c$}{c}-uniform hypergraphs}\label{sec:uniformhypergraphs}
For the rest of this section, we let $c\geq 2$ and we fix a $c$-uniform hypergraph $\Gamma$ in which all vertices are inputs in all edges they belong to. We begin by presenting a corollary of Proposition \ref{prop:1-lambda1}, followed by two examples of $c$-uniform hypergraphs for which the bound from Theorem \ref{thm:cor5.4AMZ} is sharp. We then state two results describing how vertices from different coloring classes are distributed across the edges, of which we shall prove a more general version in Section \ref{sec:d-proper}.\\

Note that, since we are assuming that $\Gamma$ is $c$-uniform, for every pair of distinct vertices $v,w\in V(\Gamma)$, we have that
\[
A_{v,w} = -\bigl| \{ e\in E \colon v,w\in e\}\bigr|, \text{ hence } L_{v,w} = \frac{\bigl| \{ e\in E \colon v,w\in e\}\bigr|}{\deg v}.
\]
Moreover, in this case, we know from Proposition \ref{prop:eigenvalues} that $\lambda_N = c$, and that this eigenvalue has multiplicity $1$, with eigenspace consisting of the constant functions.
Since $\lambda_1\geq 0$, it follows that
\[
\min\bigl\{\lambda_N-1,1-\lambda_1\bigr\} = 1-\lambda_1.
\]
Hence, the bound in Theorem \ref{thm:cor5.4AMZ} becomes
\[
\chi(\Gamma)\geq \frac{c-\lambda_1}{1-\lambda_1}.
\]
Furthermore, for $c$-uniform unoriented hypergraphs, the function defined in Equation \eqref{eq:g_ij} in the proof of Theorem \ref{thm:cor5.4AMZ} can be equivalently defined as follows.
\begin{definition}\label{def:gij}
    Let $V_1,\ldots,V_k$ be the coloring classes with respect to some fixed proper $k$-coloring and let $i,j\in \{1,\ldots,k\}$ be distinct. Then, the function $g_{ij}\colon V\to \R$ is defined by
    \[
    g_{ij}(w) = \begin{cases}
        1, &\text{ if }w\in V_i,\\
        -1, &\text{ if }w\in V_j,\\
        0, &\text{ otherwise}
    \end{cases}
    \]
\end{definition}
Proposition \ref{prop:1-lambda1} has the following corollary.

\begin{corollary}\label{cor:chi=c}

\begin{enumerate}
    \item  If $\chi=c$, then $\lambda_1=0$, and the eigenvalue $0$ has multiplicity at least $c-1$.
    \item If $\lambda_1=0$, then the bound from Theorem \ref{thm:cor5.4AMZ} is sharp if and only if $\chi=c$.
\end{enumerate}
\end{corollary}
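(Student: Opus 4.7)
My plan for Corollary \ref{cor:chi=c} is to combine three pieces of information already available in this subsection: (i) the trivial lower bound $\chi \geq c$, which holds because the $c$ vertices of any single edge must receive pairwise distinct colors in a proper strong coloring; (ii) the identities $\lambda_N = c$ (from Proposition \ref{prop:eigenvalues} applied to a $c$-uniform unoriented hypergraph) and $\lambda_1 \geq 0$, so that the bound of Theorem \ref{thm:cor5.4AMZ} simplifies, as the authors observed, to $\chi \geq (c - \lambda_1)/(1 - \lambda_1)$; and (iii) Proposition \ref{prop:1-lambda1}, which controls the multiplicity of $\lambda_1$ whenever this bound is attained with equality.

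For part (1), I would substitute $\chi = c$ into the bound and clear denominators to get $c(1 - \lambda_1) \geq c - \lambda_1$, which rearranges to $\lambda_1(1 - c) \geq 0$. Since $c \geq 2$, this forces $\lambda_1 \leq 0$, and combined with $\lambda_1 \geq 0$ yields $\lambda_1 = 0$. The bound is then saturated, so Proposition \ref{prop:1-lambda1} applies and immediately delivers $m_\Gamma(0) \geq \chi - 1 = c - 1$. The only subtlety is verifying the hypothesis $A(\Gamma) \neq \boldsymbol{0}$ of Proposition \ref{prop:1-lambda1}; this is automatic in the $c$-uniform unoriented setting with $c \geq 2$, since any edge produces a nonzero (negative) off-diagonal entry of $A$ between any two of its vertices.

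Part (2) is essentially a one-line substitution: setting $\lambda_1 = 0$ reduces the bound of Theorem \ref{thm:cor5.4AMZ} to $\chi \geq c$, and the right-hand side equals $(c - \lambda_1)/(1 - \lambda_1) = c$, so sharpness is equivalent to $\chi = c$. Thus the substantive content of the corollary is concentrated in part (1), and the only ingredient beyond elementary algebra is the structural input supplied by Proposition \ref{prop:1-lambda1}. I do not expect a genuine obstacle here beyond the routine bookkeeping just described.
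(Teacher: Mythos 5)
Your proposal is correct and follows essentially the same route as the paper: both parts reduce to substituting into the simplified bound $\chi\geq(c-\lambda_1)/(1-\lambda_1)$, concluding $\lambda_1=0$ from $\lambda_1\geq 0$, and invoking Proposition \ref{prop:1-lambda1} for the multiplicity statement. Your explicit check that $A(\Gamma)\neq\boldsymbol{0}$ (hence $1-\lambda_1>0$, justifying the clearing of denominators) is a detail the paper leaves implicit, but it is not a substantive difference.
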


\begin{proof} If $\chi=c$, then the bound from Theorem \ref{thm:cor5.4AMZ} can be rewritten as follows:
    \[
    \lambda_1\leq \frac{\chi-c}{\chi-1} = 0.
    \]
    Since $\lambda_1\geq0$, the bound must be sharp. By Proposition \ref{prop:1-lambda1}, this implies that the multiplicity of the eigenvalue $0$ is at least $\chi-1=c-1$.\\

  If $\lambda_1=0$, then the bound from Theorem \ref{thm:cor5.4AMZ} becomes $\chi \geq \lambda_N=c$, which is trivially true. Moreover, the bound is sharp if and only if $\chi=c$.

\end{proof}

\begin{example}\label{ex:hyperflowers}
Examples of hypergraphs with $\lambda_1=0$ and $\chi=c$ include $c$-uniform hyperflowers. A \emph{hyperflower} is a hypergraph $\Gamma=(V,E)$ such that there exists a non-empty proper subset $h_k\subsetneq V$ of \emph{central vertices} satisfying  $e\cap e' = h_k$ for all distinct $e,e'\in E$. The edges of a hyperflower are also called \emph{petals}. The spectrum of the \emph{$c$-uniform hyperflower $H_{p,k}^c$} with $p$ petals and $k$ central vertices is given by
\begin{equation}\label{eq:spectrumhyperflower}
\biggl\{ c^{(1)}, \bigl(c-k\bigr)^{(p-1)}, 0^{(N-p)} \biggr\},
\end{equation}
as shown in Lemma 6.12 of \cite{AndreottiMulas2022}.
Two examples of $3$-uniform hyperflowers are shown in Figure \ref{fig:petalgraphs}.
\begin{figure}[h]
    \centering
    \includegraphics[width=0.6\linewidth]{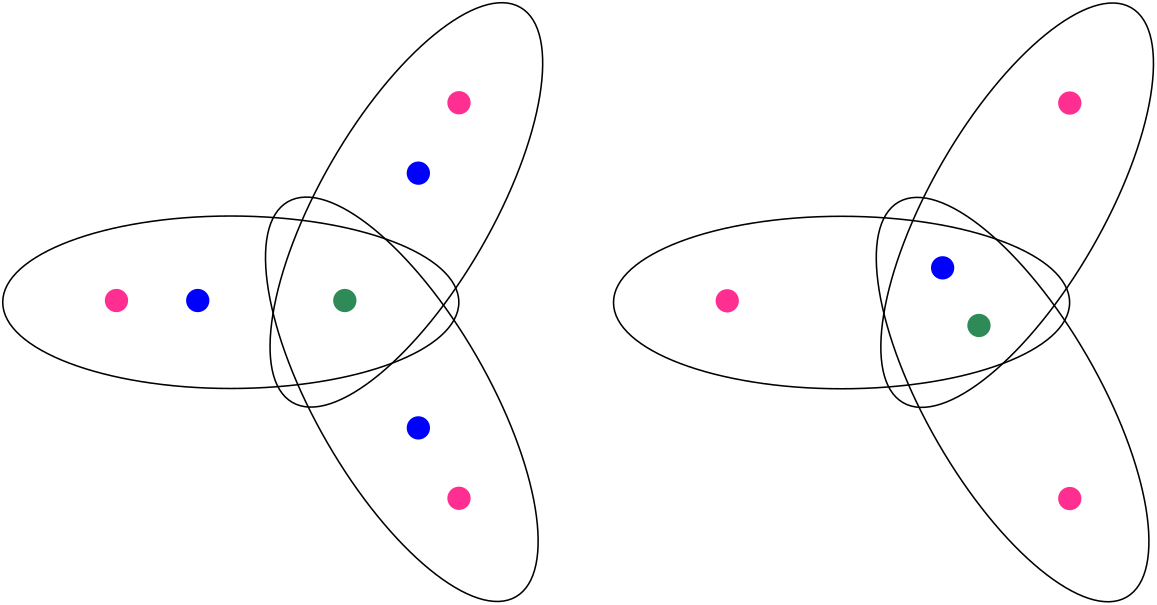}
    \caption{The two $3$-uniform hyperflowers $H_{3,1}^3$ (left) and $H_{3,2}^3$ (right) with $3$ petals and with $1$ and $2$ central vertices, respectively.}
    \label{fig:petalgraphs}
\end{figure}
\end{example}

An example of hypergraphs with $\lambda_1>0$ for which the bound from Theorem \ref{thm:cor5.4AMZ} is sharp is given by complete multipartite hypergraphs with equal-sized partition classes. These generalize complete multipartite graphs with equal-sized partition classes, which are a special case of Turán graphs. We now provide their definition and describe their spectrum. 
\begin{definition}\label{def:k-partite}
    Let $k\geq c$ and $s_i>0$, for $i\in \{1,\ldots,k\}$. The \emph{complete $k$-partite $c$-uniform hypergraph} with partition classes of sizes $s_1,\ldots,s_k$ is the hypergraph $\Gamma^c_{s_1,\ldots,s_k}$ whose vertex set $V$ can be partitioned into $k$ disjoint sets $V_1,\ldots,V_k$ of sizes $s_i = |V_i|$, such that \[ e\subseteq V \text{ is an edge of } \Gamma^c_{s_1,\ldots,s_k} \iff |e| = c \text{ and } |e\cap V_i|\leq 1 \text{ for all } i\in \{1,\ldots,k\}.\]

    We also let $\Gamma^c(s,k)\coloneqq \Gamma^c_{\underbrace{s,\ldots,s}_k}$.

\end{definition}
Hence, $\Gamma^c_{s_1,\ldots,s_k}$ contains all possible edges $e$ of cardinality $c$ such that no two vertices in $e$ belong to the same partition class $V_i$.

\begin{proposition}\label{prop:spectrumcompmult}
    Let $k\geq c$ and let $s\geq1$. The spectrum of $\Gamma^c(s,k)$ equals
    \[
    \biggl\{ c^{(1)},1^{k(s-1)},\frac{k-c}{k-1}^{(k-1)}\biggl\}.
    \]
\end{proposition}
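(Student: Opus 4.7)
The plan is to diagonalize the normalized Laplacian of $\Gamma^c(s,k)$ directly, by exploiting the high symmetry of the hypergraph. First I would verify that $\Gamma^c(s,k)$ is regular: a vertex $v\in V_i$ lies in an edge iff one picks $c-1$ of the remaining $k-1$ classes and then a vertex from each, so $\deg v = \binom{k-1}{c-1}s^{c-1}$ for every $v$. Hence $D = \binom{k-1}{c-1}s^{c-1}\cdot \id$, and the normalized Laplacian reduces to a scalar multiple of $\id - \alpha A$ for an explicit $\alpha$.

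Next I would compute the entries of $A$. Since all incidences are positive, $A_{v,w} = -\#\{e \in E : v,w \in e\}$ for $v\neq w$. If $v,w$ lie in the same class, no edge contains both, so $A_{v,w}=0$. If $v\in V_i$, $w\in V_j$ with $i\neq j$, any edge through both is obtained by choosing $c-2$ more classes from the remaining $k-2$ and one vertex from each; this gives $A_{v,w} = -\binom{k-2}{c-2}s^{c-2}$. Let $J$ denote the $N\times N$ all-ones matrix and let $P$ denote the block-diagonal matrix whose $k$ diagonal blocks are each the $s\times s$ all-ones matrix $J_s$ (so $P_{v,w}=1$ iff $v,w$ are in the same class, including $v=w$). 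Then $-A = \binom{k-2}{c-2}s^{c-2}(J-P)$, and using $\binom{k-1}{c-1}/\binom{k-2}{c-2} = (k-1)/(c-1)$ I would obtain
\[
L \;=\; \id + \frac{c-1}{(k-1)s}\bigl(J-P\bigr).
\]

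The matrices $J$ and $P$ commute and are simultaneously diagonalizable, so I would decompose $\R^N$ as the orthogonal sum of three subspaces and read off the eigenvalues on each:
\begin{itemize}
    \item $U_1 := \mathrm{span}(\mathbf 1)$, of dimension $1$: here $J\mathbf 1 = ks\mathbf 1$ and $P\mathbf 1 = s\mathbf 1$, giving the eigenvalue $1 + \frac{c-1}{(k-1)s}(ks-s) = c$.
    \item $U_2 := \{f : f \text{ is constant on each } V_i \text{ and } \sum_i f|_{V_i}=0\}$, of dimension $k-1$: here $Jf=0$ and $Pf=sf$, giving the eigenvalue $1 - \frac{c-1}{k-1} = \frac{k-c}{k-1}$.
    \item $U_3 := \{f : \sum_{v\in V_i} f(v)=0 \text{ for all } i\}$, of dimension $k(s-1)$: here $Jf=0$ and $Pf=0$, giving the eigenvalue $1$.
\end{itemize}
A dimension count gives $1 + (k-1) + k(s-1) = ks = N$, so these three eigenspaces exhaust $\R^N$ and the claimed multiset of eigenvalues is complete.

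The proof is essentially a careful block-matrix calculation, so there is no real obstacle; the only place where one must be attentive is the counting of edges through a pair of vertices (to get the correct constant $\binom{k-2}{c-2}s^{c-2}$) and the simplification of the ratio of binomial coefficients that produces the clean coefficient $\frac{c-1}{(k-1)s}$ in the expression for $L$.
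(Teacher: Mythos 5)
Your proof is correct. The paper's own proof arrives at exactly the same three eigenspaces but packages the argument differently: it simply exhibits eigenfunctions --- the two-point difference functions $f_{v_1^i,v_j^i}$ supported inside a single class (spanning your $U_3$, eigenvalue $1$), the functions $g_{ij}$ of Definition \ref{def:gij} (spanning your $U_2$, eigenvalue $(k-c)/(k-1)$), and the constants (eigenvalue $c$, with multiplicity $1$ obtained from Remark \ref{rmk:lambdaN=c} via connectedness) --- and then counts $1+k(s-1)+(k-1)=N$. What your route buys is that the verification becomes automatic: once you have the identity $L=\id+\frac{c-1}{(k-1)s}(J-P)$, the three subspaces are visibly invariant and the eigenvalues drop out, whereas the paper leaves to the reader the check that $f_{v_1^i,v_j^i}$ and $g_{ij}$ are in fact eigenfunctions, a check that requires precisely your degree and codegree counts $\binom{k-1}{c-1}s^{c-1}$ and $\binom{k-2}{c-2}s^{c-2}$. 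Two small points worth making explicit in your write-up: the regularity of $\Gamma^c(s,k)$ is what makes $L$ symmetric and your three subspaces genuinely $L$-orthogonal, and since $c\geq 2$ the three values $c$, $1$, $(k-c)/(k-1)$ need not be pairwise distinct only in degenerate parameter ranges that are excluded here, so the stated multiplicities are exact.
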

\begin{proof}
  We let $$V_i \coloneqq \bigl\{v_1^i,\ldots,v_s^i\bigr\}$$ be the partition classes of $\Gamma^c(s,k)$.
    Recall that, by Remark \ref{rmk:lambdaN=c}, since $\Gamma^c(s,k)$ is connected, it has eigenvalue $c$ with multiplicity $1$.
    Furthermore, for $1\leq i\leq k$ and $2\leq j\leq s$, let
  \[
    f_{v_1^i,v_j^i}(v)\coloneqq \begin{cases}
        1, &\text{ if } v = v_1^i,\\
        -1, &\text{ if } v = v_j^i,\\
        0, &\text{ otherwise.}
    \end{cases}
    \]
    These are $k(s-1)$ linearly independent eigenfunctions with eigenvalue $1$.
    
    Lastly, for $1\leq i<j\leq k$, the functions $g_{ij}$ from Definition \ref{def:gij} are $k-1$ linearly independent eigenfunctions with eigenvalue $\frac{k-c}{k-1}$.
\end{proof}
Now, observe that $\chi\biggl(\Gamma^c(s,k)\biggr) = k$. This leads to the following corollary.
\begin{corollary}
   For $\Gamma^c(s,k)$, the bound in Theorem \ref{thm:cor5.4AMZ} is sharp.
\end{corollary}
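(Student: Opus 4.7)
The plan is to simply evaluate both sides of the inequality in Theorem \ref{thm:cor5.4AMZ} for $\Gamma^c(s,k)$ and check that they agree. There is essentially no hard step here; the content is bookkeeping, so the proof proposal is short.

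First I would argue $\chi\bigl(\Gamma^c(s,k)\bigr)=k$, as already asserted in the corollary's preamble. The upper bound is immediate: the partition $V_1,\ldots,V_k$ is itself a proper strong $k$-coloring, since by Definition \ref{def:k-partite} every edge $e$ satisfies $|e\cap V_i|\leq 1$ for each $i$, so no two vertices in a common edge share a color. The lower bound follows because $k\geq c$ and any $c$ of the partition classes $V_{i_1},\ldots,V_{i_c}$ give rise to an edge by picking one vertex from each; if two partition classes were merged into a single color class, such an edge would contain two vertices of the same color.

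Next I would read off the extreme eigenvalues from Proposition \ref{prop:spectrumcompmult}. The spectrum is $\{c^{(1)},1^{k(s-1)},\bigl(\tfrac{k-c}{k-1}\bigr)^{(k-1)}\}$. Since $c\geq 2$ and $k\geq c$, we have $0\leq \tfrac{k-c}{k-1}<1<c$, so $\lambda_N=c$ and $\lambda_1=\tfrac{k-c}{k-1}$. (When $s=1$ the middle block is empty, but this does not change the identification of $\lambda_1$ and $\lambda_N$.)

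Finally I would compute the ratio. A direct calculation gives
\begin{equation*}
\lambda_N-\lambda_1 \;=\; c-\frac{k-c}{k-1}\;=\;\frac{k(c-1)}{k-1},\qquad 1-\lambda_1\;=\;\frac{c-1}{k-1},\qquad \lambda_N-1\;=\;c-1.
\end{equation*}
Since $k\geq c\geq 2$, we have $k-1\geq 1$, hence $\min\{\lambda_N-1,1-\lambda_1\}=\tfrac{c-1}{k-1}$. Therefore
\begin{equation*}
\frac{\lambda_N-\lambda_1}{\min\{\lambda_N-1,1-\lambda_1\}}\;=\;\frac{k(c-1)/(k-1)}{(c-1)/(k-1)}\;=\;k\;=\;\chi\bigl(\Gamma^c(s,k)\bigr),
\end{equation*}
which shows that the bound of Theorem \ref{thm:cor5.4AMZ} is attained. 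No step here is really an obstacle; the only care needed is verifying that $\lambda_1=\tfrac{k-c}{k-1}$ is indeed the minimum of the spectrum (which uses $k\geq c$ to ensure it is nonnegative and at most $1$) and that the minimum in the denominator is the $1-\lambda_1$ term.
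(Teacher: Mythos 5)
Your proposal is correct and is exactly the argument the paper intends: the corollary is stated as an immediate consequence of Proposition \ref{prop:spectrumcompmult} together with the observation $\chi\bigl(\Gamma^c(s,k)\bigr)=k$, and your computation of $\lambda_1=\tfrac{k-c}{k-1}$, $\lambda_N=c$, and the ratio $k$ is the implicit verification. The only added value in your write-up is spelling out the lower bound $\chi\geq k$ (any two vertices from distinct partition classes lie in a common edge since $k\geq c\geq 2$), which the paper leaves to the reader.
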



Further results on strong colorings, derived from the broader context of $d$-proper colorings, will be given in Theorem \ref{thm:uniformstrong} and Corollary \ref{cor:equality} at the end of Section \ref{sec:d-proper}.


\section{Spectral bounds for a generalized notion of hypergraph coloring}\label{sec:d-proper}
\subsection{\texorpdfstring{$d$}{d}-proper colorings of \texorpdfstring{$c$}{c}-uniform unoriented hypergraphs}

In this section, we study the notion of $d$-proper vertex colorings, as introduced in Definition \ref{def:d-proper1}. These generalize strong colorings (Definition \ref{def:coloring1}) by allowing up to $d$ vertices of the same color per edge.


\begin{example}\label{ex:d-proper}
    Consider a $c$-uniform hypergraph in which  the vertices represent the employees of a given company, while the edges represent project teams. Suppose that at most $d$ employees can be absent from a project team without overburdening the remaining members. Then, a $d$-proper coloring corresponds to a partition of the employees into groups that can have vacation at the same time.
\end{example}

Also in this section, we let $c\geq 2$ and we fix a $c$-uniform hypergraph $\Gamma$ in which all vertices are inputs in all edges they belong to. Recall from Definition \ref{def:d-proper1} that the $d$-proper coloring number of $\Gamma$ is denoted by $\chi_d(\Gamma)=\chi_d$, and it is the smallest positive integer such that there exists a $d$-proper $\chi_d$-coloring of the vertices of $\Gamma$.

\begin{remark}
    Since $\Gamma$ is $c$-uniform, then we
    have the following trivial bound:
    \[
    \chi_d \geq \frac cd.
    \]
\end{remark}
We now state and prove the main theorem of this section.

\begin{theorem}\label{thm:d-proper}
    Let $d$ be an integer such that $1\leq d\leq c-1$. Then,
    \[
    \chi_d \geq \frac{c-\lambda_1}{d-\lambda_1}.
    \]
    Moreover, the bound is sharp if and only if $d\mid c$.
\end{theorem}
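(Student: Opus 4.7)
The strategy is to adapt the Rayleigh-quotient technique from the proof of Theorem \ref{thm:cor5.4AMZ}, but now applied to a fixed $d$-proper $\chi_d$-coloring with classes $V_1,\ldots,V_k$ (where $k=\chi_d$), using the functions $g_{ij}$ of Definition \ref{def:gij} as test functions for $\lambda_1$. Crucially, instead of picking a single pair as in the strong-coloring proof, I will \emph{sum} the inequality $\lambda_1 \leq \RQ(g_{ij})$ over all unordered pairs $i<j$, which lets the $d$-proper constraint enter as an edge-by-edge bound on a quadratic form.

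\textbf{Main computation.} Since every vertex is an input in every edge it belongs to, a direct expansion gives
$$\RQ(g_{ij}) = \frac{\sum_{e\in E}\bigl(|e\cap V_i|-|e\cap V_j|\bigr)^2}{\sum_{e\in E}\bigl(|e\cap V_i|+|e\cap V_j|\bigr)}.$$
Clearing denominators in $\lambda_1 \leq \RQ(g_{ij})$, summing over pairs $i<j$, and applying the elementary identities $\sum_{i<j}(a_i+a_j) = (k-1)\sum_i a_i$ and $\sum_{i<j}(a_i-a_j)^2 = k\sum_i a_i^2 - (\sum_i a_i)^2$ with $a_i := |e\cap V_i|$ (so that $\sum_i a_i = c$ edge-by-edge), I would obtain
$$\lambda_1\,(k-1)\,c\,M \;\leq\; \sum_{e\in E}\Bigl(k\sum_i |e\cap V_i|^2 - c^2\Bigr),$$
where $M=|E|$. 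The $d$-proper hypothesis $|e\cap V_i|\leq d$ then yields the pointwise bound $\sum_i |e\cap V_i|^2 \leq d\sum_i |e\cap V_i| = dc$, which collapses the right-hand side to $Mc(kd-c)$. Dividing by $cM$ and rearranging (using $d-\lambda_1>0$, which follows from $\lambda_1<1\leq d$ via Remark \ref{rmk:A=0}) gives $\chi_d \geq (c-\lambda_1)/(d-\lambda_1)$.

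\textbf{Sharpness.} For the ``only if'' direction, tracking equality throughout forces $\sum_i |e\cap V_i|^2 = dc$ for every edge $e$. Since $a_i\in[0,d]$ and $a_i^2\leq d\,a_i$ with equality only when $a_i\in\{0,d\}$, every color class must meet every edge in exactly $0$ or $d$ vertices, which requires $d\mid c$. For the ``if'' direction, when $d\mid c$ I would exhibit the hypergraph $\Gamma$ consisting of $M$ pairwise disjoint $c$-edges: each connected-component block of the normalized Laplacian is the all-ones matrix $J_c$ with spectrum $\{c^{(1)},0^{(c-1)}\}$, so $\lambda_1=0$, and partitioning each edge into $c/d$ blocks of size $d$ produces a $d$-proper coloring showing $\chi_d = c/d = (c-\lambda_1)/(d-\lambda_1)$.

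\textbf{Anticipated difficulty.} The algebra is a straightforward generalization once one sees that summing the Rayleigh-quotient inequalities over all pairs is the right move; the cleaner ingredient is the pointwise bound $\sum_i a_i^2 \leq dc$ under the box constraint, which is exactly where the $d$-proper condition bites. The most delicate point is the equality analysis: verifying that the combinatorial condition $|e\cap V_i|\in\{0,d\}$ is consistent with the Rayleigh-quotient saturation (each $g_{ij}$ being a $\lambda_1$-eigenfunction). This is why I prove the converse half of the sharpness statement by producing an explicit example rather than by reverse-engineering the chain of inequalities.
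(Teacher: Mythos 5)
Your proof is correct, but it takes a genuinely different route from the paper's. The paper starts from $c=\lambda_N=\RQ(\boldsymbol 1)$, splits the adjacency sum into between-class and within-class contributions, rebalances the denominators so that the between-class part can be controlled by a single extremal pair $(V_1,V_2)$ via $\RQ(g_{12})\geq\lambda_1$ (with a two-case analysis depending on the sign of the extremal difference), and bounds the within-class part by $(d-1)\chi_d$ using the fact that each vertex has at most $d-1$ same-colored co-members per edge. You instead clear denominators in $\lambda_1\leq\RQ(g_{ij})$ and sum over \emph{all} pairs, which, via the identity $\sum_{i<j}(a_i-a_j)^2=k\sum_i a_i^2-\bigl(\sum_i a_i\bigr)^2$ with $a_i=|e\cap V_i|$, reduces everything to the pointwise estimate $\sum_i a_i^2\leq d\sum_i a_i=dc$; this avoids both the extremal-pair selection and the case split, and it makes the equality analysis cleaner, since saturation of $a_i^2\leq d\,a_i$ immediately forces $a_i\in\{0,d\}$ and hence $d\mid c$ (the paper extracts the same condition from equality in its within-class bound). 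Your sharpness example (disjoint $c$-edges, $\lambda_1=0$, $\chi_d=c/d$) is just as valid as the paper's hyperflower. What the paper's formulation buys in exchange for the extra bookkeeping is modularity: isolating the within-class term as a separate degree-weighted quantity is exactly what lets the same proof be reused verbatim for $q$-tailored colorings (Theorem \ref{thm:q-tailored}) and for $d$-improper graph colorings (Theorem \ref{thm:d-improper}), and it feeds directly into the eigenfunction conditions of Corollary \ref{cor:d-propersharp}; your averaging argument would need to be reworked for those generalizations, though it is the more economical proof of the theorem as stated. Two small points to make explicit if you write this up: you need $\chi_d\geq 2$ for the sum over pairs to be nonempty (guaranteed since $\chi_d\geq c/d>1$ when $d\leq c-1$), and $d-\lambda_1>0$ follows because $\lambda_N=c>1$ forces $A\neq\boldsymbol 0$ and hence $\lambda_1<1\leq d$ by Remark \ref{rmk:A=0}.
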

\begin{proof}
    Recall that $\lambda_N=c$ and that the constant functions are corresponding eigenfunctions. Let $V_1,\ldots,V_{\chi_d}$ be the coloring classes with respect to a fixed $d$-proper $\chi_d$-coloring. Then,
    \begin{align*}
      \nonumber  c =& \RQ(\boldsymbol 1) = 1 - 2\frac{\sum_{v\neq w}A_{v,w}}{\sum_{v\in V}\deg v}\\
   \nonumber     =& 1 + 2\frac{\sum_{v\neq w}\bigl| A_{v,w}\bigr|}{\sum_{v\in V}\deg v}\\
    \nonumber    =& 1 + 2\frac{\sum_{i\neq j}\sum_{\substack{v\in V_i\\ w\in V_j}}\bigl| A_{v,w}\bigr|}{\sum_{v\in V}\deg v}+2\frac{\sum_i\sum_{\substack{v,w\in V_i\\ v\neq w}}\bigl| A_{v,w}\bigr|}{\sum_{v\in V}\deg v}.
    \end{align*}
    By rewriting the denominator of the first fraction and substituting the coefficient $2$ in front of the second fraction by $2\chi_d - 2(\chi_d - 1)$, we obtain 
    \begin{align}
     \nonumber c   =& 1 + 2\frac{\sum_{i\neq j}\sum_{\substack{v\in V_i\\ w\in V_j}}\bigl| A_{v,w}\bigr|}{\frac1{\chi_d-1}\sum_{i\neq j}\sum_{v\in V_i\cup V_j}\deg v} -2(\chi_d-1)\frac{\sum_i\sum_{\substack{v,w\in V_i\\ v\neq w}}\bigl| A_{v,w}\bigr|}{\sum_{v\in V}\deg v}\\
        &+2\chi_d\frac{\sum_i\sum_{\substack{v,w\in V_i\\ v\neq w}}\bigl| A_{v,w}\bigr|}{\sum_{v\in V}\deg v}.  \label{eq:fractions}
    \end{align}    
    We shall bound the last fraction in \eqref{eq:fractions} later. We first focus on rewriting and bounding the difference between the first two fractions. We have
    \begin{align*}
        & 2\frac{\sum_{i\neq j}\sum_{\substack{v\in V_i\\ w\in V_j}}\bigl| A_{v,w}\bigr|}{\frac1{\chi_d-1}\sum_{i\neq j}\sum_{v\in V_i\cup V_j}\deg v} -2(\chi_d-1)\frac{\sum_i\sum_{\substack{v,w\in V_i\\ v\neq w}}\bigl| A_{v,w}\bigr|}{\sum_{v\in V}\deg v} \\
        =& 2\bigl(\chi_d-1\bigr)\frac{\sum_{i\neq j}\sum_{\substack{v\in V_i\\ w\in V_j}}\bigl| A_{v,w}\bigr|}{\sum_{i\neq j}\sum_{v\in V_i\cup V_j}\deg v} -2(\chi_d-1)\frac{\bigl(\chi_d-1\bigr)\sum_i\sum_{\substack{v,w\in V_i\\ v\neq w}}\bigl| A_{v,w}\bigr|}{(\chi_d-1)\sum_{v\in V}\deg v} \\
        =& 2\bigl(\chi_d-1\bigr)\frac{\sum_{i\neq j}\sum_{\substack{v\in V_i\\ w\in V_j}}\bigl| A_{v,w}\bigr|}{\sum_{i\neq j}\sum_{v\in V_i\cup V_j}\deg v} -2(\chi_d-1)\frac{\sum_{i\neq j}\sum_{\substack{v,w\in V_i \text{ or }\\ v,w\in V_j}}\bigl| A_{v,w}\bigr|}{\sum_{i\neq j}\sum_{v\in V_i\cup V_j}\deg v}\\
        =& 2(\chi_d-1) \frac{\sum_{i\neq j}\biggl(\sum_{\substack{v\in V_i\\w\in V_j}}\bigl| A_{v,w}\bigr| - \sum_{\substack{v,w\in V_i \text{ or }\\ v,w\in V_j}}\bigl| A_{v,w}\bigr|\biggr)}{\sum_{i\neq j}\sum_{v\in V_i\cup V_j}\deg v}\\
        \leq& 2(\chi_d-1) \frac{\sum_{i\neq j}\max\biggl\{0,\sum_{\substack{v\in V_i\\w\in V_j}}\bigl| A_{v,w}\bigr| \sum_{\substack{v,w\in V_i \text{ or }\\ v,w\in V_j}}\bigl| A_{v,w}\bigr|\biggr\}}{\sum_{i\neq j}\sum_{v\in V_i\cup V_j}\deg v}.
    \end{align*}
    To  further bound the last expression, we consider two cases. \\

    \textbf{Case $1$:} 
    \[
    \sum_{i\neq j}\max\biggl\{0,\sum_{\substack{v\in V_i\\w\in V_j}}\bigl| A_{v,w}\bigr| - \sum_{\substack{v,w\in V_i \text{ or }\\ v,w\in V_j}}\bigl| A_{v,w}\bigr|\biggr\}>0.
    \]
    In this case, there exist $i\neq j$ such that $$\sum_{\substack{v\in V_i\\w\in V_j}}\bigl| A_{v,w}\bigr| - \sum_{\substack{v,w\in V_i \text{ or }\\ v,w\in V_j}}\bigl| A_{v,w}\bigr|>0,$$ and
    we may assume without loss of generality that
    \[
    \max_{i\neq j}\biggl\{\frac{\sum_{\substack{v\in V_i\\w\in V_j}}\bigl| A_{v,w}\bigr| - \sum_{\substack{v,w\in V_i \text{ or }\\ v,w\in V_j}}\bigl| A_{v,w}\bigr|}{\sum_{v\in V_i\cup V_j}\deg v}\biggr\}
    \]
    is attained by $i=1$ and $j=2$.
    Using the function $g_{12}$ as in Definition \ref{def:gij}, we have
    \begin{align*}
        &2(\chi_d-1) \frac{\sum_{i\neq j}\max\left\{0,\sum_{\substack{v\in V_i\\w\in V_j}}\bigl| A_{v,w}\bigr| - \sum_{\substack{v,w\in V_i \text{ or }\\ v,w\in V_j}}\bigl| A_{v,w}\bigr|\right\}}{\sum_{i\neq j}\sum_{v\in V_i\cup V_j}\deg v}\\
        \leq& 2\bigl(\chi_d-1\bigr) \frac{\sum_{\substack{v\in V_1\\w\in V_2}}\bigl| A_{v,w}\bigr| - \sum_{\substack{v,w\in V_1 \text{ or }\\ v,w\in V_2}}\bigl| A_{v,w}\bigr|}{\sum_{\substack{v\in V_1\cup V_2}}\deg v}\\
        \overset{(i)}{=}& \bigl(\chi_d-1\bigr)\bigl(1-\RQ(g_{12})\bigr) \\
        {\leq}& \bigl(\chi_d-1\bigr)\bigl(1- \lambda_{1}\bigr),
    \end{align*}
    where, in $(i)$, we used the fact that
    \begin{align*}
        \RQ\bigl(g_{12}\bigr) &= 1 + 2\frac{\sum_{v\neq w}\bigl|A_{v,w}\bigr| \cdot g_{12}(v) \cdot g_{12}(w)}{\sum_{v\in V}\deg v\cdot  g_{12}(v)^2}\\
        &= 1+ 2 \frac{\sum_{\substack{v\in V_1\\w\in V_2}}\bigl|A_{v,w}\bigr| \cdot g_{12}(v) \cdot g_{12}(w) +\sum_{\substack{v,w\in V_1 \text{ or } \\ v,w\in V_2}} \bigl|A_{v,w}\bigr| \cdot g_{12}(v) \cdot g_{12}(w)}{\sum_{v\in V_1 \cup V_2}\deg v\cdot g_{12}(v)^2}\\
        &= 1+2\frac{-\sum_{\substack{v\in V_1\\w\in V_2}}\bigl|A_{v,w}\bigr| +\sum_{\substack{v,w\in V_1 \text{ or } \\ v,w\in V_2}} \bigl|A_{v,w}\bigr|}{\sum_{v\in V_1 \cup V_2}\deg v}.
    \end{align*}

\textbf{Case $2$:}
    \[
    \sum_{i\neq j}\max\biggl\{0,\sum_{\substack{v\in V_i\\w\in V_j}}\bigl| A_{v,w}\bigr| - \sum_{\substack{v,w\in V_i\\ v\neq w}}\bigl|A_{v,w}\bigr| - \sum_{\substack{v,w\in V_j\\v\neq w}}\bigl|A_{v,w}\bigr|\biggr\}=0.
    \]
    Then,
    \begin{align*}
        &2(\chi_d-1) \frac{\sum_{i\neq j}\max\biggl\{0,\sum_{\substack{v\in V_i\\w\in V_j}}\bigl| A_{v,w}\bigr| - \sum_{\substack{v,w\in V_i\\ v\neq w}}\bigl|A_{v,w}\bigr| - \sum_{\substack{v,w\in V_j\\v\neq w}}\bigl|A_{v,w}\bigr|\biggr\}}{\sum_{i\neq j}\sum_{v\in V_i\cup V_j}\deg v} = 0\\
        <& \bigl(\chi_d-1\bigr)\bigl(1-\lambda_1\bigr),
    \end{align*}
    since $\lambda_1<1$ and $\sum_{i\neq j}\sum_{v\in V_i\cup V_j}\deg v>0$.\newline

   Hence, Cases 1 and 2 show that the difference between the first two fractions in \eqref{eq:fractions} is bounded above by $\bigl(\chi_d-1\bigr)\bigl(1-\lambda_1\bigr)$.\newline 

    We now bound the last  fraction appearing in \eqref{eq:fractions}. We have that

    \begin{align*}
        2\chi_d\frac{\sum_i\sum_{\substack{v,w\in V_i\\ v\neq w}}\bigl| A_{v,w}\bigr|}{\sum_{v\in V}\deg v}
        &= 2\chi_d\frac{\frac12 \sum_{v\in V}\sum_{\substack{w\in V\colon w \text{ in same}\\ \text{coloring class as $v$}}}\bigl|A_{v,w}\bigr|}{\sum_{v\in V}\deg v}\\
        &\overset{(ii)}{\leq} \chi_d \frac{\sum_{v\in V}\deg v(d-1)}{\sum_{v\in V}\deg v}\\
        &=(d-1)\chi_d,
    \end{align*}
    where, in $(ii)$, we used the fact that, for every vertex $v\in V$ and every edge $e\ni v$, there are at most $d-1$ other vertices in $e$ that have the same color as $v$.\newline
    
    Combining \eqref{eq:fractions} with the bounds obtained above, we conclude that
    \begin{align*}
        c 
        &\leq 1 + \bigl(\chi_d-1\bigr)\bigl(1- \lambda_{1}\bigr) + \chi_d(d-1),
    \end{align*}
    which is equivalent to
    \[
    \chi_d \geq \frac{c-\lambda_1}{d-\lambda_1}.
    \]    
    Finally, we need to show that the bound is sharp if and only if $d\mid c$. \\
     Note that, if the inequality in $(ii)$
    is an equality, then, for every coloring with corresponding coloring classes $V_1,\ldots,V_{\chi_d}$, for every edge $e\in E$, and for every coloring class $V_i$, we must have $|e\cap V_i| \in \bigl\{0,d\bigr\}$. Hence, $d\mid c$.

       
    Now we want to show that, if $d\mid c$, then the bound in the statement is sharp, that is, there exists a $c$-uniform hypergraph for which $$\chi_d = \frac{c-\lambda_1}{d-\lambda_1}.$$

    Consider a $c$-uniform hyperflower. Then, we know that $\lambda_1=0$, and therefore the bound becomes $\chi_d\geq c/d$. To show that this is an equality,  observe that one can always color the vertices of a $c$-uniform hyperflower using exactly $c/d$ colors, such that each edge contains exactly $d$ vertices of every color. See \cref{fig:hyperflowerbound} for the case where $c=9$ and $d=3$.  Therefore, $\chi_d= c/d$.

\end{proof}

 \begin{figure}
        \centering
        \includegraphics[width=0.4\linewidth]{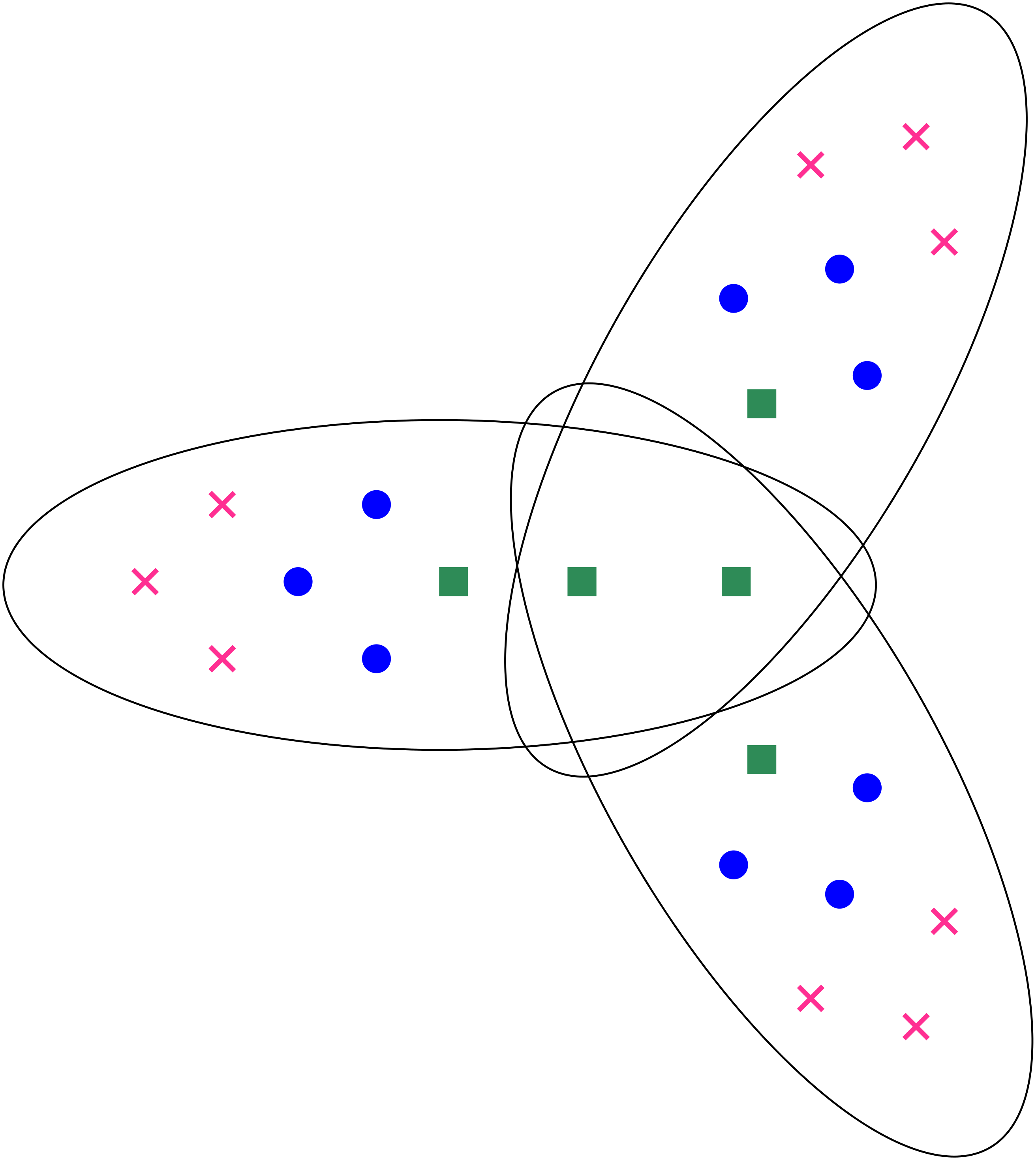}
        \caption{The $9$-uniform hyperflower $H_{3,2}^9$, for which the bound in \cref{thm:d-proper} is sharp.}
        \label{fig:hyperflowerbound}
    \end{figure}



In Corollary \ref{cor:d-propersharp} below, we provide necessary conditions for a hypergraph to attain equality in the bound from Theorem \ref{thm:d-proper}. To that end, we first prove the following theorem.

\begin{theorem}\label{thm:uniformd-proper}
     Let $\Gamma$ be a $c$-uniform oriented hypergraph for which all vertices are inputs in all edges they belong to. Moreover, let $V_1,\ldots,V_k$ be a partition of the vertex set, and assume that there exists $m\geq 1$ such that, for every edge $e\in E$ and for all $i=1,\ldots,k$, 
     $$\bigl| e\cap V_i\bigr|\in \bigl\{0,m\bigr\}.$$ Then, the functions $g_{ij}$ from Definition \ref{def:gij} are eigenfunctions if and only if, for all $i=1,\ldots,k$ and for all $v\in V$, we have
    \begin{equation}\label{eq:assumption}
    \bigl| \{e\in E \colon v\in e \text{ and } e\cap V_i \neq \varnothing\}\bigr| = \begin{cases}
        \frac{(c/m-1) \deg v}{k -1}, &\text{ if } v\notin V_i,\\
        \deg v, &\text{ if } v\in V_i.
    \end{cases}
    \end{equation}
   In this case, the corresponding eigenvalue is $(m\cdot k-c)/(k-1)$.
\end{theorem}
\begin{proof}
    For the ``if'' direction, let $1\leq i\neq j\leq k$, and consider vertices $v_i\in V_i$,  $v_j\in V_j$. Let also $v\in V\setminus \bigl(V_i\cup V_j\bigr)$. Then, we have that
    \begin{align*}
        Lg_{ij}(v) &= \sum_{w\in V}L_{v,w}g_{ij}(w)\\
        &= \sum_{w\in V_i}\frac{|A_{v,w}|}{\deg v} - \sum_{w\in V_j}\frac{|A_{v,w}|}{\deg v}\\
        &= \frac1{\deg v} \biggl( \bigl| \bigl\{\bigl(e,w_i\bigr)\colon v\in e, w_i \in e\cap V_i\bigr\}\bigr| - \bigl| \bigl\{\bigl(e,w_j\bigr)\colon v\in e, w_j \in e\cap V_j\bigr\}\bigr| \biggr)\\
        &= \frac m{\deg v}\biggl(\frac{(c/m-1) \deg v}{k -1} - \frac{(c/m-1) \deg v}{k -1}\biggr)\\
        &= 0,
           \end{align*} while
             \begin{align*}
        Lg_{ij}(v_i) &= \sum_{w\in V}L_{v_i,w}g_{ij}(w)\\
        &= 1+\sum_{w\in V_i}\frac{|A_{v_i,w}|}{\deg v_i}-\sum_{w\in V_j}\frac{|A_{v_i,w}|}{\deg v_i}\\
        &= \frac 1{\deg v_i}\biggl(\bigl|\bigl\{(e,w_i)\colon v_i\in e, w_i\in e\cap V_i\bigr\}\bigr|-\bigl|\bigl\{(e,w_i)\colon v_i\in e, w_i\in e\cap V_j\bigr\}\bigr| \biggr),
    \end{align*}
    where the last equality follows from the definition of the adjacency matrix and from the observation that $v_i$ appears in exactly $\deg v_i$ ordered pairs $(e,v_i)$, ranging over all edges $e$ such that $v_i \in e$.

    Next, recall our assumption that for every edge $e \in E$ and for each index $i = 1,\ldots,k$, one has $\lvert e \cap V_i \rvert \in \{0,m\}$. In particular, if $v_i \in e \cap V_i$, then the intersection $e \cap V_i$ must contain exactly $m$ vertices, which implies the existence of precisely $m-1$ additional vertices $w \in e \cap V_i$ distinct from $v_i$. Combining this observation with \eqref{eq:assumption}, we obtain
    \begin{align*}
        Lg_{ij}(v_i) &= \frac 1{\deg v_i}\bigl(\deg v_i\cdot m - m\cdot \bigl|\bigl\{e\colon v_i\in e, e\cap V_j\neq \varnothing\bigr\}\bigr|\bigr)\\
        &=m - \frac{m}{\deg v_i}\cdot \frac{(c/m-1)\deg v_i}{k-1}\\
        &= \frac{m(k-1)}{k-1}-\frac{c-m}{k-1}\\
        &=\frac{m\cdot k-c}{k-1}.
    \end{align*}
    Analogously, one can show that
    \begin{align*}
        Lg_{ij}(v_j) &= \frac{c-m\cdot k}{k-1}.
    \end{align*}
    Hence, $g_{ij}$ is an eigenfunction with eigenvalue $(m\cdot k-c)/(k-1)$.\newline 

    To prove the ``only if'' direction, assume that the functions $g_{ij}$ are eigenfunctions. Then, let $v\in V$, and fix distinct $i,j\in \{1,\ldots,k\}$ such that $v\notin V_i\cup V_j$. Then,
    \begin{align*}
        0 &= Lg_{ij}(v)\\
        &= \frac1{\deg v} \biggl( \bigl| \bigl\{\bigl(e,w_i\bigr)\colon v\in e, w_i \in e\cap V_i\bigr\}\bigr| - \bigl| \bigl\{\bigl(e,w_j\bigr)\colon v\in e, w_j \in e\cap V_j\bigr\}\bigr| \biggr).
    \end{align*}
    Hence, for any $i,j$ such that $v\notin V_i\cup V_j$, we have that
    \[
    \bigl| \bigl\{\bigl(e,w_i\bigr)\colon v\in e, w_i \in e\cap V_i\bigr\}\bigr| = \bigl| \bigl\{\bigl(e,w_j\bigr)\colon v\in e, w_j \in e\cap V_j\bigr\}\bigr|.
    \]
    Since
    \[
    \sum_{\substack{l=1,\ldots,k\\ v\notin V_l}}\bigl| \bigl\{\bigl(e,w_l\bigr)\colon v\in e, w_l \in e\cap V_l\bigr\}\bigr| = \deg v \bigl(c-m\bigr)
    \]
    and since $|e\cap V_i|\in \{0,m\}$ for all $e\in E$ and $i=1,\ldots,k$,
    we conclude that
    \[
    \bigl| \{e\in E \colon v\in e \text{ and } e\cap V_i \neq \varnothing\}\bigr| = \begin{cases}
        \frac{(c/m-1) \deg v}{k -1}, &\text{ if } v\notin V_i,\\
        \deg v, &\text{ if } v\in V_i. 
    \end{cases}\qedhere
    \]
\end{proof}
Theorem \ref{thm:uniformd-proper} has the following corollary, which provides necessary conditions for a hypergraph to attain equality in the bound from Theorem \ref{thm:d-proper}.

\begin{corollary}\label{cor:d-propersharp}
   If Theorem \ref{thm:d-proper} holds with equality, then for any $d$-proper $\chi_d$-coloring with coloring classes $V_1,\ldots,V_{\chi_d}$, the following hold:
    \begin{enumerate}
        \item For each edge $e\in E$ and for all $i=1,\ldots,\chi_d$,
        \[
        \bigl|e\cap V_i\bigr| \in \bigl\{0,d \bigr\}.
        \]
        \item For all distinct $i,j \in\{ 1,\ldots,\chi_d\}$, the functions $g_{ij}$ from Definition \ref{def:gij} are eigenfunctions with eigenvalue \[\lambda_1 = \frac{d\cdot \chi_d - c}{\chi_d-1}.\]
        \item For all distinct $i,j\in\{1,\ldots,\chi_d\}$, and for all $v_i\in V_i$, we have
        \[
        \bigl| \bigl\{ e\colon v_i\in e \text{ and } e\cap V_j\neq \varnothing \bigr\}\bigr| = \frac{\deg v_i\bigl( c/d-1\bigr)}{\chi_d-1}.
        \]
    \end{enumerate}

    Moreover, even if the bound in Theorem \ref{thm:d-proper} is not attained, statement \textup{(1)} implies that statements \textup{(2)} and \textup{(3)} are equivalent.
\end{corollary}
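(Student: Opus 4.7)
The plan is to deduce all three statements by pinpointing which inequalities in the proof of Theorem \ref{thm:d-proper} must become equalities when $\chi_d=(c-\lambda_1)/(d-\lambda_1)$, and then to invoke Theorem \ref{thm:uniformd-proper} with $m=d$ and $k=\chi_d$ to package the equivalence of (2) and (3). Since that equivalence does not depend on the sharpness of the bound, it also immediately yields the ``moreover'' assertion.

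To establish (1), I would isolate inequality $(ii)$ in the proof of Theorem \ref{thm:d-proper}, which, after unfolding the double-counting of same-class pairs, reads
\[
2\sum_i\sum_{\substack{v,w\in V_i\\ v\neq w}}|A_{v,w}| \leq (d-1)\sum_{v\in V}\deg v,
\]
and which was obtained from the pointwise observation that every edge $e\ni v$ contains at most $d-1$ vertices of the same color as $v$ besides $v$ itself. Tightness forces this local count to equal $d-1$ at every incidence $(v,e)$, which is exactly the statement that $|e\cap V_i|\in\{0,d\}$ for every edge $e$ and every color class $V_i$. For (2), I would then trace the remaining slack: the step that bounds a weighted average of the pairwise ratios $A_{ij}/B_{ij}$ by their maximum (achieved WLOG at $(1,2)$), and the subsequent inequality $\RQ(g_{12})\ge\lambda_1$. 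The equality case of the weighted-mean bound forces all ratios to coincide, so $\RQ(g_{ij})=\RQ(g_{12})$ for every $i\neq j$; and $\RQ(g_{12})=\lambda_1$ combined with the min-max principle (Theorem \ref{min-max theorem}) promotes each $g_{ij}$ to a $\lambda_1$-eigenfunction. The explicit value $\lambda_1=(d\chi_d-c)/(\chi_d-1)$ then follows by algebraically inverting $\chi_d=(c-\lambda_1)/(d-\lambda_1)$.

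Finally, for (3) and the ``moreover'' clause, I would invoke Theorem \ref{thm:uniformd-proper} with $m=d$ and $k=\chi_d$. Statement (1) is precisely the hypothesis of that theorem, and its conclusion is the \emph{iff} between ``the $g_{ij}$ are eigenfunctions'' and the incidence counting identity (3), with corresponding eigenvalue $(d\chi_d-c)/(\chi_d-1)$, matching the value found above. This gives (3) under the equality hypothesis, and independently yields (2)$\Leftrightarrow$(3) under the sole assumption (1). I expect the main subtlety of the plan to lie in the WLOG choice of the pair $(1,2)$ in Case $1$ of the proof of Theorem \ref{thm:d-proper}: one has to upgrade ``$\RQ(g_{12})=\lambda_1$ at the maximizing pair'' to the stronger assertion ``$\RQ(g_{ij})=\lambda_1$ for every pair'', which requires the equality case of the weighted-mean inequality and is precisely what feeds the eigenfunction conclusion into Theorem \ref{thm:uniformd-proper}.
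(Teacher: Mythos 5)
Your proposal is correct and follows essentially the same route as the paper: equality in step $(ii)$ of the proof of Theorem \ref{thm:d-proper} yields (1), equality in the mediant/weighted-mean step together with the min-max principle yields $\RQ(g_{ij})=\lambda_1$ for all pairs and hence (2), and Theorem \ref{thm:uniformd-proper} with $m=d$, $k=\chi_d$ then delivers (3) as well as the unconditional equivalence of (2) and (3) under hypothesis (1). Your write-up is in fact somewhat more explicit than the paper's about why all pairs $(i,j)$, not just the maximizing one, satisfy $\RQ(g_{ij})=\lambda_1$.
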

\begin{proof}
    Assume that Theorem \ref{thm:d-proper} holds with equality. Then, we know by the proof of Theorem \ref{thm:d-proper} that, for all $e\in E$ and $i=1,\ldots,\chi_d$, we have $\bigl|e\cap V_i\bigr|\in \bigl\{0,d\bigr\}$, which proves (1).
    
    Furthermore, for all distinct $i,j\in\{1,\ldots,\chi_d\}$, we have that $\RQ\bigl(g_{ij}\bigr)=\lambda_1$. Therefore, the functions $g_{ij}$ are eigenfunctions with eigenvalue $\lambda_1$. By applying Theorem \ref{thm:uniformd-proper} with $m=d$ and $k=\chi_d$, we conclude that (2) and (3) are both true.
    
    Finally, even if the bound in Theorem \ref{thm:d-proper} is not attained, then by Theorem \ref{thm:uniformd-proper} we infer that, if condition (1) is satisfied, then conditions (2) and (3) are equivalent.
\end{proof}
We now state Theorem \ref{thm:uniformd-proper} and Corollary \ref{cor:d-propersharp} in the special case where $m=1$, so that the partition $V_1,\ldots,V_k$ corresponds to a strong vertex coloring (Definition \ref{def:coloring1}).
\begin{theorem}\label{thm:uniformstrong}
    Let $\Gamma$ be a $c$-uniform oriented hypergraph with only positive signs, and let $V_1,\ldots,V_k$ be the coloring classes of a fixed proper strong $k$-coloring. Then, the functions $g_{ij}$ from Definition \ref{def:gij} are eigenfunctions if and only if
    \[
    \bigl| \{e\in E \colon v\in e \text{ and } e\cap V_i \neq \varnothing\}\bigr| = \begin{cases}
        \frac{(c-1) \deg v}{k -1}, &\text{ if } v\notin V_i,\\
        \deg v, &\text{ if } v\in V_i.
    \end{cases}
    \]
    In this case, the corresponding eigenvalue is $(k-c)/(k-1)$.
\end{theorem}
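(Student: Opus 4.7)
The plan is to observe that Theorem \ref{thm:uniformstrong} is the specialization of Theorem \ref{thm:uniformd-proper} to the case $m=1$, so no new calculation is required. The key identification is that a proper strong $k$-coloring of $\Gamma$ is, by Definition \ref{def:coloring1}, precisely a partition $V_1,\ldots,V_k$ of $V(\Gamma)$ such that every edge contains at most one vertex of each color. Equivalently, for every $e\in E$ and every $i\in\{1,\ldots,k\}$ we have
\[
\bigl|e\cap V_i\bigr|\in\bigl\{0,1\bigr\}.
\]
This is exactly the hypothesis of Theorem \ref{thm:uniformd-proper} with $m=1$.

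First I would invoke Theorem \ref{thm:uniformd-proper} with $m=1$ and $k$ as in the present statement. The condition appearing there,
\[
\bigl| \{e\in E \colon v\in e \text{ and } e\cap V_i \neq \varnothing\}\bigr| = \begin{cases}
    \dfrac{(c/m-1) \deg v}{k -1}, &\text{ if } v\notin V_i,\\
    \deg v, &\text{ if } v\in V_i,
\end{cases}
\]
becomes, after substituting $m=1$, exactly the condition displayed in the statement of Theorem \ref{thm:uniformstrong}. Likewise, the associated eigenvalue $(m\cdot k-c)/(k-1)$ specializes to $(k-c)/(k-1)$.

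Since both the ``if'' and ``only if'' directions of Theorem \ref{thm:uniformd-proper} transfer verbatim under this substitution, the equivalence and the eigenvalue formula claimed in Theorem \ref{thm:uniformstrong} follow immediately. There is no genuine obstacle here: the only thing to check is that the standing assumption on $\Gamma$ (being $c$-uniform with all vertices inputs of every incident edge) matches the hypothesis of Theorem \ref{thm:uniformd-proper}, which it does by wording. Thus the proof reduces to a one-line appeal to Theorem \ref{thm:uniformd-proper}.
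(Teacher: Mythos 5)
Your proposal is correct and matches the paper exactly: the paper explicitly presents Theorem \ref{thm:uniformstrong} as the special case $m=1$ of Theorem \ref{thm:uniformd-proper}, with no further argument, just as you do. The identification of a proper strong coloring with the condition $|e\cap V_i|\in\{0,1\}$ is the only step needed, and you state it correctly.
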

The following corollary is a special case of Corollary \ref{cor:d-propersharp} and generalizes Theorem 7 from \cite{Gabriel-colouring}. 
\begin{corollary}\label{cor:equality}
     Let $\Gamma$ be a $c$-uniform oriented hypergraph with only positive signs, and let $V_1,\ldots,V_\chi$ be the coloring classes of a fixed proper strong $\chi$-coloring. If Theorem \ref{thm:cor5.4AMZ} holds with equality, then for all $v\in V$ and for all $1\leq i\leq \chi$, we have that
    \[
    \bigl| \{e\in E \colon v\in e \text{ and } e\cap V_i \neq \varnothing\}\bigr| = \begin{cases}
        \frac{(c-1) \deg v}{\chi -1}, &\text{ if } v\notin V_i,\\
        \deg v, &\text{ if } v\in V_i.
    \end{cases}
    \]
\end{corollary}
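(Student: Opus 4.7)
The plan is to derive Corollary \ref{cor:equality} as a direct specialization of Corollary \ref{cor:d-propersharp} in the case $d=1$, together with the observation that, for $c$-uniform unoriented hypergraphs, a proper strong coloring is exactly a $1$-proper coloring.

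First I would note that, because $\Gamma$ is $c$-uniform and all vertex-edge incidences are positive, Proposition \ref{prop:eigenvalues} gives $\lambda_N = c$ and the bound in Theorem \ref{thm:cor5.4AMZ} becomes
\[
\chi \geq \frac{c-\lambda_1}{1-\lambda_1},
\]
which is precisely the inequality in Theorem \ref{thm:d-proper} with $d=1$. Moreover, any proper strong $\chi$-coloring is a $1$-proper coloring with $\chi_1 = \chi$, so the hypothesis that Theorem \ref{thm:cor5.4AMZ} is attained is the same as saying Theorem \ref{thm:d-proper} is attained at $d=1$.

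Next I would invoke Corollary \ref{cor:d-propersharp} with $d=1$. Part (1) gives $|e\cap V_i|\in\{0,1\}$, which is automatic for a strong coloring and serves only to verify the hypothesis of Theorem \ref{thm:uniformd-proper} at $m=1$; part (3), specialized to $d=1$ and $k=\chi$, yields for any $v_i\in V_i$ and $j\neq i$
\[
\bigl| \{ e : v_i\in e,\; e\cap V_j\neq \varnothing\}\bigr| = \frac{(c-1)\deg v_i}{\chi-1},
\]
which is the off-class case of the corollary. The on-class case ($v\in V_i$) is trivial, since every edge containing $v$ automatically meets $V_i$ at $v$ itself, contributing $\deg v$.

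The only subtle point to check is that the two arbitrary indices $i,j$ in Corollary \ref{cor:d-propersharp}(3) cover all $v\in V$ and all coloring classes in the statement of Corollary \ref{cor:equality}. This is immediate: given any vertex $v$ and any class $V_i$ with $v\notin V_i$, set $V_j$ to be the class containing $v$, apply the formula from Corollary \ref{cor:d-propersharp}(3), and relabel. I do not anticipate a genuine obstacle here; the work was already done in Theorem \ref{thm:uniformd-proper} and Corollary \ref{cor:d-propersharp}, and the role of this corollary is simply to record the $d=1$ case in the classical language of strong colorings, thereby recovering and generalizing Theorem 7 of \cite{Gabriel-colouring}.
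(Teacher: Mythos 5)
Your proposal is correct and matches the paper exactly: the paper itself presents Corollary \ref{cor:equality} as nothing more than the $d=1$ (equivalently $m=1$) specialization of Corollary \ref{cor:d-propersharp} and Theorem \ref{thm:uniformd-proper}, which is precisely the reduction you carry out. Your additional checks — that the bound of Theorem \ref{thm:cor5.4AMZ} coincides with the $d=1$ bound of Theorem \ref{thm:d-proper} in this setting, and that the on-class case $v\in V_i$ is trivial — are exactly the (unwritten) details the paper relies on.
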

Corollary \ref{cor:equality} shows that, if Theorem \ref{thm:cor5.4AMZ} holds with equality (for $c$-uniform oriented hypergraphs with only positive signs), then the neighbors of any vertex $v$ are evenly distributed among the coloring classes of any proper strong $\chi$-coloring.

\subsection{\texorpdfstring{$q$}{q}-tailored colorings}\label{subsec:q-tailored}

We shall now focus on $q$-tailored colorings (Definition \ref{def:q-tailored1}), which generalize $d$-proper colorings in the context of $c$-uniform hypergraphs for which all vertices are inputs in all their edges. We begin by recalling the definition and the idea behind it from Section \ref{section:overview-colorings}.\newline

   \emph{Recall.} Let $\Gamma$ be a $c$-uniform hypergraph in which all vertices are inputs in all edges they belong to, and let $q\in [0,c-1]$. A (vertex) $k$-coloring $c\colon V(\Gamma)\to \{1,\ldots,k\}$ that has coloring classes $V_1,\ldots,V_k$ is \emph{$q$-tailored} if, for all $i=1,\ldots,k$ and for all $v\in V_i$, we have
    \begin{equation*}
        \sum_{w\in V_i}|A_{v,w}|\leq q \deg v.
    \end{equation*}
    The corresponding coloring number is denoted by $\chi^{q\text{-t}}\coloneqq \chi^{q\text{-t}}(\Gamma)$. It is the smallest positive integer such that there exists a $q$-tailored $\chi^{q\text{-t}}$-coloring of the vertices of $\Gamma$.\newline

In particular, if $\Gamma$ is a $c$-uniform hypergraph in which every vertex is an input in all edges it belongs to, then any $d$-proper coloring is also a $(d-1)$-tailored coloring. This is why $q$-tailored colorings generalize $d$-proper colorings in this setting.\\

Informally, a $q$-tailored coloring ensures that no vertex is too connected to others of the same color: the average number of same-colored neighbors of any vertex $v\in V$ in an edge is at most a fraction $q$ of its degree.


\begin{example}\label{ex:q-tailored}
    Consider a hypergraph in which the vertices represent professional tailors, and the edges represent collaborative projects, such as working on the same clothing line. We assume that every project requires the same amount of time from each tailor.
    
    We aim to partition the tailors into vacation groups such that the workload for the non-vacationing tailors does not become too high. To achieve this, we fix a fraction $q\in [0,c-1]$. A $q$-tailored coloring ensures that, for each tailor, the cumulative number of shared projects with vacationing colleagues, which is equal to the sum of absolute values of the adjacency entries of vertices belonging to the same vacation group,
    does not exceed this fraction $q$ of their total workload.
\end{example}

We generalize Theorem \ref{thm:d-proper} to the setting of $q$-tailored colorings, as follows.

\begin{theorem}\label{thm:q-tailored}
    Let $q\in [0,c-1]$. Then,
    \[
    \chi^{q\text{-t}} \geq \frac{c-\lambda_1}{q+1-\lambda_1}.
    \]
    Furthermore, the bound is sharp.
\end{theorem}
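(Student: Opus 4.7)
The plan is to adapt the proof of Theorem \ref{thm:d-proper} essentially verbatim, replacing the single step where the $d$-proper hypothesis was used with the analogous step using the $q$-tailored hypothesis. The structure of the argument transfers directly because the between-class analysis is independent of the specific flavor of coloring; only the within-class bound needs to be reformulated. Note that, since $\Gamma$ is $c$-uniform with $c\geq 2$ and all vertices are inputs, Proposition \ref{prop:eigenvalues} gives $\lambda_N = c > 1$, so by Remark \ref{rmk:A=0} we have $\lambda_1 < 1$ throughout.

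First, I would start from $c = \lambda_N = \RQ(\boldsymbol{1})$, fix a $q$-tailored $\chi^{q\text{-t}}$-coloring with classes $V_1, \ldots, V_{\chi^{q\text{-t}}}$, and expand the Rayleigh quotient exactly as in the proof of Theorem \ref{thm:d-proper}, splitting the sum over $v \neq w$ into pairs of vertices in different classes and pairs of vertices in the same class. This yields the analog of \eqref{eq:fractions} with three fractions, the first two encoding between-class interactions and the third encoding within-class interactions.

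The between-class part would then be bounded exactly as in Theorem \ref{thm:d-proper}: after splitting into Case 1 and Case 2 and invoking the functions $g_{ij}$ from Definition \ref{def:gij}, we obtain the upper bound $\bigl(\chi^{q\text{-t}} - 1\bigr)\bigl(1 - \lambda_1\bigr)$ via $\RQ(g_{ij}) \geq \lambda_1$. This step carries over word-for-word. The one step that changes is the bound on the within-class contribution. In Theorem \ref{thm:d-proper} this used the $d$-proper property to obtain $\sum_{w \in V_i,\, w \neq v} |A_{v,w}| \leq (d-1)\deg v$ for $v \in V_i$; here, the defining inequality of a $q$-tailored coloring provides directly $\sum_{w \in V_i,\, w \neq v} |A_{v,w}| = \sum_{w \in V_i} |A_{v,w}| \leq q \deg v$ (using $A_{v,v} = 0$). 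Summing over $v \in V_i$ and over $i$, the within-class fraction is bounded above by $q\,\chi^{q\text{-t}}$. Combining with the between-class bound yields
\[
c \leq 1 + \bigl(\chi^{q\text{-t}} - 1\bigr)\bigl(1 - \lambda_1\bigr) + q\,\chi^{q\text{-t}},
\]
which rearranges to the claimed inequality.

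For sharpness, I would exhibit a $c$-uniform hyperflower (Example \ref{ex:hyperflowers}) with $\lambda_1 = 0$, for which the bound reduces to $\chi^{q\text{-t}} \geq c/(q+1)$. When $q+1$ is a positive integer dividing $c$, the $(q+1)$-proper coloring of the hyperflower described in the proof of Theorem \ref{thm:d-proper} (illustrated in Figure \ref{fig:hyperflowerbound}) has the property that each color class meets every edge in exactly $q+1$ vertices, so for every $v \in V_i$, $\sum_{w \in V_i} |A_{v,w}| = q \deg v$, making the coloring $q$-tailored. It uses exactly $c/(q+1)$ colors, attaining the bound. I do not anticipate a significant obstacle, since the proof of Theorem \ref{thm:d-proper} was essentially shaped around the within-class inequality that the $q$-tailored definition captures in cleaner, more general form; the only subtle point is the careful aggregation of the per-vertex $q$-tailored inequality via the double-counting identity $\sum_i\sum_{v,w\in V_i, v\neq w}|A_{v,w}| = \tfrac12 \sum_v \sum_{w\text{ same class as }v}|A_{v,w}|$ used in step $(ii)$ of Theorem \ref{thm:d-proper}.
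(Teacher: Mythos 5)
Your proposal is correct and follows essentially the same route as the paper: the authors likewise reuse the proof of Theorem \ref{thm:d-proper} wholesale, replacing only the bound on the within-class fraction in \eqref{eq:fractions} with the $q$-tailored inequality $\sum_{w\in V_i}|A_{v,w}|\leq q\deg v$ to get the term $q\,\chi^{q\text{-t}}$, and they obtain sharpness by appealing to the hyperflower example from Theorem \ref{thm:d-proper} (your explicit verification that the $(q+1)$-proper coloring of the hyperflower is $q$-tailored is exactly the implicit content of that appeal).
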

\begin{proof}
    The proof is analogous to the proof of Theorem \ref{thm:d-proper} (substituting $\chi_d$ by $\chi^{q\text{-t}}$ ), except for the bound of the last fraction in \eqref{eq:fractions}. In the more general definition of $q$-tailored colorings, we have:
    \begin{align*}
        2\chi^{q\text{-t}}\frac{\sum_i\sum_{\substack{v,w\in V_i\\ v\neq w}}\bigl| A_{v,w}\bigr|}{\sum_{v\in V}\deg v}
        &= 2\chi^{q\text{-t}}\frac{\frac12 \sum_{v\in V}\sum_{\substack{w\in V\colon w \text{ in same}\\ \text{coloring class as $v$}}}\bigl|A_{v,w}\bigr|}{\sum_{v\in V}\deg v}\\
        &\overset{(i)}{\leq} \chi^{q\text{-t}} \frac{\sum_{v\in V}q\cdot \deg v}{\sum_{v\in V}\deg v}\\
        &=q\chi^{q\text{-t}},
    \end{align*}
    where, in $(i)$, we applied the definition of $q$-tailored coloring.
    
    Combining the two bounds, we obtain
    \begin{align*}
        c 
        &\leq 1 + \bigl(\chi^{q\text{-t}}-1\bigr)\bigl(1- \lambda_{1}\bigr) + q\cdot\chi^{q\text{-t}},
    \end{align*}
    which is equivalent to
    \[
    \chi^{q\text{-t}} \geq \frac{c-\lambda_1}{q+1-\lambda_1}.
    \]
    Finally, by Theorem \ref{thm:d-proper}, we know that the bound is sharp.
\end{proof}
The following corollary follows by the proof of Theorem \ref{thm:q-tailored}.
\begin{corollary}\label{cor:q-tailored}
  If the bound in Theorem \ref{thm:q-tailored} holds as an equality, then for all $v\in V$,    
    \[
    \sum_{w\in V_i}|A_{v,w}| = q \deg v.
    \]
    Moreover, for every $q$-tailored coloring with $\chi^{q\text{-t}}$ colors and coloring classes $V_1,\ldots,V_{\chi^{q\text{-t}}}$, the functions $g_{ij}$ from Definition \ref{def:gij} are eigenfunctions with eigenvalue
    \[
    \lambda_1 = \frac{(q+1)\chi^{q\text{-t}}-c}{\chi^{q\text{-t}}-1}.
    \]
\end{corollary}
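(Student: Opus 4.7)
The plan is to inspect the proof of Theorem \ref{thm:q-tailored} and identify every inequality used; the hypothesis that the bound holds with equality will force each of these to become an equality, for any fixed choice of $q$-tailored $\chi^{q\text{-t}}$-coloring. Two independent chains of inequalities contribute to the final bound: one producing the $(\chi^{q\text{-t}}-1)(1-\lambda_1)$ term (bounding the ``cross-class'' contribution in \eqref{eq:fractions}) and one producing the $q\chi^{q\text{-t}}$ term (bounding the ``within-class'' contribution through step $(i)$ of that proof).

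The within-class chain is the easier one. Step $(i)$ applies the defining inequality $\sum_{w\in V_i}|A_{v,w}|\leq q\deg v$ to every $v\in V$, with $V_i$ the class containing $v$. Because all summands are nonnegative and the totals must coincide, equality is forced in each individual term; this immediately gives the first assertion of the corollary.

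The cross-class chain is more delicate, and proceeds in three substeps: (a) each summand is replaced by its positive part; (b) a weighted average of the ratios $\bigl(\sum_{v\in V_i,w\in V_j}|A_{v,w}|-\sum_{v,w\in V_i\text{ or }v,w\in V_j}|A_{v,w}|\bigr)\big/\sum_{v\in V_i\cup V_j}\deg v$ is bounded by its maximum, attained at $(i,j)=(1,2)$ after relabeling if necessary; and (c) the bound $\RQ(g_{12})\geq\lambda_1$ is invoked. Equality at (a) forces each such difference to be nonnegative; equality at (b) forces each ratio to coincide with the maximum; and equality at (c) gives $\RQ(g_{12})=\lambda_1$. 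Combining these, $\RQ(g_{ij})=\lambda_1$ for every $i\neq j$, so each $g_{ij}$ attains the minimum of the Rayleigh quotient and is therefore an eigenfunction for $\lambda_1$.

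The explicit value of $\lambda_1$ then follows by solving the equality $c=1+(\chi^{q\text{-t}}-1)(1-\lambda_1)+q\chi^{q\text{-t}}$ for $\lambda_1$. The main technical subtlety I anticipate lies in step (b): one must argue that a weighted average of nonnegative ratios equals its maximum only when every ratio coincides with the maximum, and this is precisely the mechanism that transfers the equality $\RQ(g_{12})=\lambda_1$ to every other pair and produces the promised family of eigenfunctions.
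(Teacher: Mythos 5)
Your proposal is correct and follows exactly the route the paper intends: the paper gives no separate proof of this corollary, stating only that it ``follows by the proof of Theorem \ref{thm:q-tailored},'' and the intended argument is precisely your equality-tracing through the within-class bound (step $(i)$ of that proof, giving $\sum_{w\in V_i}|A_{v,w}|=q\deg v$) and the cross-class chain inherited from Theorem \ref{thm:d-proper} (positive parts, the mediant-type averaging step, and $\RQ(g_{12})\geq\lambda_1$), followed by solving $c=1+(\chi^{q\text{-t}}-1)(1-\lambda_1)+q\chi^{q\text{-t}}$ for $\lambda_1$. Your handling of the delicate point in substep (b) --- that a weighted average of nonnegative ratios equals its maximum only if all ratios coincide --- is exactly the mechanism needed to propagate $\RQ(g_{ij})=\lambda_1$ to every pair.
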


\section{Coloring graphs \texorpdfstring{$d$}{d}-improperly}\label{sec:d-improper}

In this section, we consider graphs instead of general hypergraphs.
Let $G=(V,E)$ be a graph with $|V|=N$. We let $L=I-D^{-1}A$ be its normalized Laplacian, and we write the eigenvalues of $L$ by
\[
0=\lambda_1\leq \cdots \leq \lambda_N.
\]

While the bound in Theorem \ref{thm:cor5.4AMZ} was proved in \cite{Abiad20} for hypergraphs, its graph version, $\chi\geq \lambda_N/\bigl(\lambda_N-1\bigr)$, was first established in \cite{Gabriel-colouring}. In this section, we prove a generalization of this inequality that involves the $d$-improper coloring number (Definition \ref{def:d-improper1}).\newline

\emph{Recall.} A vertex coloring $c\colon V\to \{1,\ldots,k\}$ is \emph{$d$-improper} if every vertex $v$ has at most $d$ neighbors with the same color as $v$. The \emph{$d$-improper coloring number}, denoted $\chi^d=\chi^d(G)$, is the minimal number of colors needed to color $G$ $d$-improperly.\newline 

\begin{remark}
   A proper vertex coloring of a graph is $0$-improper. Moreover, if $G$ is a $k$-regular graph and if $d\geq0$, then a $d/k$-tailored coloring (Definition \ref{def:q-tailored1}) corresponds to a $d$-improper coloring.
\end{remark}
Bilu (2006) \cite{Bilu2006} proved the following generalization of the Hoffman bound \cite{Hoffman1970}.
\begin{theorem}
    Let $d\geq0$ and let $\mu_1\geq \cdots \geq \mu_N$ be the eigenvalues of the adjacency matrix. Then,
    \begin{equation}\label{eq:d-improperA}
        \chi^d\bigl(G\bigr) \geq \frac{\mu_1-\mu_N}{d-\mu_N}.
    \end{equation}
\end{theorem}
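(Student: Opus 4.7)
The plan is to adapt Hoffman's classical quadratic-form argument directly to the adjacency matrix. Without loss of generality we may assume $G$ is connected. Let $x$ be a Perron eigenvector of $A$ with eigenvalue $\mu_1$, normalized so that $\|x\|^2=1$; by Perron--Frobenius, all entries of $x$ are strictly positive. Given a $d$-improper $k$-coloring with color classes $V_1,\ldots,V_k$ where $k=\chi^d(G)$, let $y_i$ denote the restriction of $x$ to $V_i$, extended by zero outside $V_i$, and set $w_i\coloneqq\|y_i\|^2$. Then $x=\sum_i y_i$, the vectors $y_i$ have pairwise disjoint supports, and $\sum_i w_i=1$.

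The argument rests on two Rayleigh-quotient estimates. First, the submatrix $A|_{V_i\times V_i}$ is a nonnegative symmetric matrix whose row sums are all at most $d$ (this is exactly the $d$-improper condition), so its spectral radius is at most $d$; since $y_i$ has nonnegative entries, this yields the upper bound $y_i^\top A y_i\leq d\,w_i$. Second, the min-max characterization of the smallest adjacency eigenvalue applied to $y_i-y_j$, whose squared norm equals $w_i+w_j$ by disjoint supports, gives
\[
y_i^\top A y_i + y_j^\top A y_j - 2\, y_i^\top A y_j \;\geq\; \mu_N\,(w_i+w_j)
\]
for each pair $i\neq j$.

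Summing this lower bound over all ordered pairs $i\neq j$, and using the identities $\sum_{i,j} y_i^\top A y_j = x^\top A x = \mu_1$ and $\sum_i w_i=1$, collapses the left side to $2k\sum_i y_i^\top A y_i - 2\mu_1$ and the right side to $2(k-1)\mu_N$, yielding $k\sum_i y_i^\top A y_i \geq \mu_1+(k-1)\mu_N$. Combining this with the upper bound $\sum_i y_i^\top A y_i\leq d$ obtained by summing the first estimate produces $kd\geq \mu_1+(k-1)\mu_N$, which rearranges exactly to $k\geq(\mu_1-\mu_N)/(d-\mu_N)$. The only step requiring genuine care is the first estimate: controlling $y_i^\top A y_i$ by $d\,w_i$ via the spectral radius of the within-class block of $A$, which uses the nonnegativity of both the block and of the Perron eigenvector. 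Everything else is bookkeeping with the summation identities.
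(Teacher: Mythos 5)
Your argument is correct, and it is worth noting that the paper itself does not prove this statement at all: it is quoted from Bilu (2006), so the only thing to compare against is the paper's proof of the normalized-Laplacian analogue, Theorem~\ref{thm:d-improper}. Your route is the classical Hoffman-type quadratic-form argument on the adjacency matrix: decompose a unit $\mu_1$-eigenvector $x$ along the color classes, bound the within-class energy $y_i^\top A y_i$ by $d\,w_i$ via the spectral radius of the diagonal block (whose row sums are at most $d$ by $d$-improperness), and bound the cross terms from below by testing $\mu_N$ against $y_i-y_j$; the bookkeeping you describe is exact ($2kS-2\mu_1\ge 2(k-1)\mu_N$ with $S\le d$ gives $k(d-\mu_N)\ge\mu_1-\mu_N$). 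The paper's technique for Theorem~\ref{thm:d-improper} is structurally parallel but uses the all-ones vector (the extremal eigenfunction of the normalized Laplacian) in place of the Perron vector and the unweighted $\pm1$ test functions $g_{ij}$ of Definition~\ref{def:gij} in place of $y_i-y_j$, bounding the within-class term by counting same-colored neighbors directly rather than via a spectral-radius lemma; your version buys the sharper adjacency-spectrum bound (which the paper notes coincides with its bound only for regular graphs), while the paper's version generalizes to hypergraph Rayleigh quotients. Two small remarks on your write-up: the division at the end needs $d>\mu_N$, which holds whenever $G$ has an edge since then $\mu_N\le-1$; and neither the reduction to connected graphs nor the positivity of $x$ is actually used anywhere --- every step (the spectral-radius bound on the diagonal block, the min-max bound on $y_i-y_j$, and the identity $\sum_{i,j}y_i^\top Ay_j=\mu_1$) holds for an arbitrary unit $\mu_1$-eigenvector, so you could simply drop the Perron--Frobenius framing. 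If you do keep the reduction to components, it requires a one-line justification, namely that $b\mapsto(\mu_1-b)/(d-b)$ is increasing in $b$ when $\mu_1>d$, so the bound for the component realizing $\mu_1$ dominates the bound for $G$.
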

We  now adapt the proof strategy from Section \ref{sec:d-proper} to derive a similar spectral bound for $d$-improper coloring of graphs.
\begin{theorem}\label{thm:d-improper}
    Let $G$ be a graph, let $d\geq 0$, and let $\overline{\deg}$ denote the average degree of $G$. Then,
    \begin{equation}\label{eq:d-improperNL}
        \chi^d \geq \frac{\lambda_N}{\lambda_N-1+d/\overline{\deg}}.
    \end{equation}
    Moreover, the bound is sharp. 
\end{theorem}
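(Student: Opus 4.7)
The plan is to mimic the structure of the proof of Theorem~\ref{thm:d-proper}, with a twist: since the constant function—which served as the eigenfunction for $\lambda_N=c$ in the $c$-uniform unoriented setting—has Rayleigh quotient $0$ on a graph (corresponding to $\lambda_1$, not $\lambda_N$), I would invert the strategy and instead build an explicit family of test functions from the $d$-improper coloring, then exploit the min--max characterization of $\lambda_N$ as a maximum to obtain a lower bound.

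First I would fix a $d$-improper coloring of $G$ with $k\coloneqq\chi^d$ color classes $V_1,\ldots,V_k$, and introduce the bookkeeping $D_i\coloneqq\sum_{v\in V_i}\deg v$, $E_{ii}\coloneqq|\{e\in E:e\subseteq V_i\}|$, and $E_{ij}\coloneqq|\{\{v,w\}\in E:v\in V_i,\,w\in V_j\}|$ for $i\neq j$. For each pair $i\neq j$ with both classes nonempty I would use the test function $g_{ij}$ from Definition~\ref{def:gij} (value $1$ on $V_i$, $-1$ on $V_j$, $0$ elsewhere). A direct edge-by-edge count—splitting edges by whether they lie inside $V_i$, inside $V_j$, between $V_i$ and $V_j$, or between $V_i\cup V_j$ and its complement—yields
\[
\RQ(g_{ij})=\frac{2E_{ij}+D_i+D_j-2E_{ii}-2E_{jj}}{D_i+D_j},
\]
while Theorem~\ref{min-max theorem} supplies $\lambda_N\geq\RQ(g_{ij})$.

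The crucial step is to average these inequalities over $i<j$ with weights $D_i+D_j$. The combinatorial identities $\sum_{i<j}E_{ij}=|E|-\sum_iE_{ii}$, $\sum_{i<j}(D_i+D_j)=2(k-1)|E|$, and $\sum_{i<j}(E_{ii}+E_{jj})=(k-1)\sum_iE_{ii}$ conspire so that the cross-class contributions align and the weighted sum collapses to
\[
\sum_{i<j}(D_i+D_j)\,\RQ(g_{ij})=2k\Bigl(|E|-\sum_iE_{ii}\Bigr).
\]
Dividing by $2(k-1)|E|$ then gives $\lambda_N\geq k(|E|-\sum_iE_{ii})/((k-1)|E|)$. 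I expect carrying out this algebraic cancellation cleanly to be the main technical obstacle; the $E_{ii},E_{jj},E_{ij}$ and $D_i,D_j$ terms must line up just right for the cross-class pieces to telescope as above.

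To finish, the $d$-improper hypothesis gives $2\sum_iE_{ii}=\sum_v|N(v)\cap V_{c(v)}|\leq dN$, and combined with $|E|=N\overline{\deg}/2$ this yields $(|E|-\sum_iE_{ii})/|E|\geq(\overline{\deg}-d)/\overline{\deg}$. Substituting and rearranging produces $\chi^d\geq\lambda_N/(\lambda_N-1+d/\overline{\deg})$. For sharpness I would take $G=K_n$ with $(d+1)\mid n$, $d$-improperly colored by partitioning $V(G)$ into $n/(d+1)$ blocks of size $d+1$: then $\chi^d=n/(d+1)$, $\lambda_N=n/(n-1)$, $\overline{\deg}=n-1$, and a direct substitution verifies that the bound is attained.
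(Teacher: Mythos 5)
Your proof is correct, but it reaches the bound by a noticeably different mechanism than the paper. The paper starts from the identity $0=\RQ(\boldsymbol 1)$, splits the edge sum into cross-class and monochromatic parts, and controls the cross-class part by selecting the single pair $(i,j)$ maximizing $\RQ(g_{ij})$ and invoking $\RQ(g_{12})\leq\lambda_N$; you instead never touch $\RQ(\boldsymbol 1)$ and take a weighted average of the inequalities $\lambda_N\geq\RQ(g_{ij})$ over all pairs, with weights $D_i+D_j$. Your formula $\RQ(g_{ij})=(2E_{ij}+D_i+D_j-2E_{ii}-2E_{jj})/(D_i+D_j)$ checks out (the $D_i+D_j-2E_{ii}-2E_{jj}-2E_{ij}$ term accounts for edges leaving $V_i\cup V_j$), the three combinatorial identities do make the weighted sum collapse to $2k(|E|-\sum_iE_{ii})$, and the resulting inequality $\lambda_N\geq k(|E|-\sum_iE_{ii})/((k-1)|E|)$ combined with $2\sum_iE_{ii}\leq dN$ rearranges to exactly the paper's bound; the two routes are in fact equivalent at the end (the ``best pair'' always dominates the weighted average), but your version avoids the WLOG selection and the fraction bookkeeping, at the cost of the slightly longer identity verification. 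You should add a sentence disposing of the degenerate case $\chi^d=1$ (where there are no pairs to average over): there the claim reduces to $d\geq\overline{\deg}$, which follows since a one-class $d$-improper coloring forces $\max\deg\leq d$. For sharpness the paper cites a family of regular graphs from Guo, Kang and Zwaneveld attaining the adjacency version of the bound, whereas your $K_n$ with $(d+1)\mid n$ is an explicit, self-contained witness: $\chi^d=n/(d+1)$, $\lambda_N=n/(n-1)$, $\overline{\deg}=n-1$ give $\lambda_N/(\lambda_N-1+d/\overline{\deg})=n/(d+1)$, which is a nice improvement in concreteness.
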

\begin{proof}
    Let $V_1,\ldots,V_{\chi^d}$ be the coloring classes of a $d$-improper coloring. Then, analogously  to the beginning of the proof of Theorem \ref{thm:d-proper}, we have
    \begin{align*}
    0 &= \RQ\bigl(\boldsymbol 1\bigr) = \frac{\sum_{v\sim w}\bigl(\boldsymbol 1(v)-\boldsymbol 1(w)\bigr)^2}{\sum_{v\in V}\deg v\cdot \boldsymbol 1(v)^2}\\
        &= 1 - 2\frac{\sum_{v\sim w}\boldsymbol 1(v)\boldsymbol 1(w)}{\sum_{v\in V}\deg v}\\
       &= 1-2\frac{\sum_{i\neq j}\sum_{\substack{v\sim w\\ v\in V_i\\w\in V_j}}1}{\frac1{\chi^d-1}\sum_{i\neq j}\sum_{v\in V_i\cup V_j}\deg v}-2\frac{\bigl(\chi^d-1\bigr)\sum_i\sum_{\substack{v\sim w\\ v,w\in V_i}}1}{\bigl(\chi^d-1\bigr)\sum_{v\in V}\deg v}
    \end{align*}    
    By substituting the coefficient $2$ in front of the second fraction by $2\chi^d-2(\chi^d-1)$, we obtain
    \begin{align}
       \nonumber  0  &= 1-2\bigl(\chi^d-1\bigr)\frac{\sum_{i\neq j}\sum_{\substack{v\sim w\\ v\in V_i\\ w\in V_j}}1}{\sum_{i\neq j}\sum_{v\in V_i\cup V_j}\deg v} +2\bigl(\chi^d-1\bigr)\frac{\bigl(\chi^d-1\bigr)\sum_i\sum_{\substack{v\sim w\\ v,w\in V_i}}1}{\bigl(\chi^d-1\bigr)\sum_{v\in V}\deg v} -2\chi^d\frac{\sum_i\sum_{\substack{v\sim w\\v,w\in V_i}}1}{\sum_{v\in V}\deg v}\\
       \label{eq:fractions2}    &= 1-2\bigl(\chi^d-1\bigr)\frac{\sum_{i\neq j}\biggl(\sum_{\substack{v\sim w\\ v\in V_i\\ w\in V_j}}1 - \sum_{\substack{v\sim w\\ v,w\in V_i \text{ or }\\ v,w\in V_j}}1\biggr)}{\sum_{i\neq j}\sum_{v\in V_i\cup V_j}\deg v} -2\chi^d \frac{\sum_i\sum_{\substack{v\sim w\\v,w\in V_i}}1}{\sum_{v\in V}\deg v}.
    \end{align}
    We now bound the first and second fraction in \eqref{eq:fractions2} separately. For the first fraction, we 
    may assume without loss of generality that $\RQ\bigl(g_{12}\bigr) = \max_{i\neq j}\RQ\bigl(g_{ij}\bigr)$. Then,
    analogously to the proof of Theorem \ref{thm:d-proper}, we have that
    \begin{align*}
        2\bigl(\chi^d-1\bigr)\frac{\sum_{i\neq j}\biggl(\sum_{\substack{v\sim w\\ v\in V_i\\ w\in V_j}}1 - \sum_{\substack{v\sim w\\ v,w\in V_i \text{ or }\\ v,w\in V_j}}1\biggr)}{\sum_{i\neq j}\sum_{v\in V_i\cup V_j}\deg v}
        &\leq 2\bigl(\chi^d-1\bigr)\frac{\biggl(\sum_{\substack{v\sim w\\ v\in V_1\\ w\in V_2}}1 - \sum_{\substack{v\sim w\\ v,w\in V_1 \text{ or }\\ v,w\in V_2}}1\biggr)}{\sum_{v\in V_1\cup V_2}\deg v} \\
        &= \bigl(\RQ\bigl(g_{12}\bigr)-1\bigr)\bigl(\chi^d-1\bigr)\\&\leq \bigl(\lambda_N-1\bigr)\bigl(\chi^d-1\bigr).
    \end{align*}
    To bound the second fraction in \eqref{eq:fractions2}, we observe that 
    \begin{align*}
        2\chi^d \frac{\sum_i\sum_{\substack{v\sim w\\v,w\in V_i}}1}{\sum_{v\in V}\deg v} &= 2\chi^d \frac{\sum_{v\in V} \sum_{\substack{w\sim v\colon\\ w\text{ in same colo-}\\\text{ring class as $v$}}}1}{2\cdot |E|}\\
        &= \frac{\chi^d}{|E|}\cdot \sum_{v\in V} \sum_{\substack{w\sim v\colon\\ w\text{ in same colo- }\\\text{ring class as $v$}}}1\\
        &\leq \frac{\chi^d}{|E|}\cdot |V| \cdot \frac d2,
    \end{align*}
    since every $v\in V$ has at most $d$ neighbors in the same coloring class, and each edge is counted twice. Hence,
    \begin{align*}
        2\chi^d \frac{\sum_i\sum_{\substack{v\sim w\\v,w\in V_i}}1}{\sum_{v\in V}\deg v}
        &\leq \frac{\chi^d}{|E|}\cdot |V| \cdot \frac d2\\
        &= \chi^d\cdot \frac{d}{\overline{\deg}}.
    \end{align*}
    Combining \eqref{eq:fractions2} with the two bounds above, we obtain
    \begin{align*}
        0 &\geq 1-\bigl(\lambda_N-1\bigr)\bigl(\chi^d-1\bigr) - \chi^d\cdot \frac{d}{\overline{\deg}}.
    \end{align*}
    Rearranging yields
    \begin{align*}
        \chi^d &\geq \frac{\lambda_N}{\lambda_N-1+d/\overline{\deg}}.
    \end{align*}
    Finally, the bound is sharp: Guo, Kang and Zwaneveld \cite{guo2024spectral} showed that the bound in \eqref{eq:d-improperA}, which coincides with our bound in the case of regular graphs, is attained with equality for a family of regular graphs.
\end{proof}
We now establish the following theorem concerning graphs that attain equality in Theorem \ref{thm:d-improper}.

\begin{theorem}\label{thm:d-improperequality}
    Let $d\geq0$ and let $G$ be a graph for which the bound in \eqref{eq:d-improperNL} is attained with equality. Then, $G$ must be a regular graph. 
\end{theorem}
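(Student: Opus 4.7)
The plan is to trace the inequalities used in the proof of Theorem \ref{thm:d-improper} and extract the structural consequences of equality. There are three places where the proof loses information:
\begin{enumerate}
    \item the convex-combination bound $\sum_{i\neq j} A_{ij}\bigm/\sum_{i\neq j} B_{ij} \leq \max_{i\neq j}\,A_{ij}/B_{ij}$ (with $B_{ij}=\sum_{V_i\cup V_j}\deg v>0$);
    \item the min--max bound $\RQ(g_{12})-1\leq \lambda_N-1$;
    \item the $d$-improper counting bound $\sum_{v\in V}|\{w\sim v : w\text{ same color as }v\}|\leq |V|\cdot d$.
\end{enumerate}
Equality in \textup{(1)--(2)} forces every $g_{ij}$ to be an eigenfunction of $L$ with eigenvalue $\lambda_N$, and equality in \textup{(3)} forces every vertex $v$ to have exactly $d$ same-color neighbors.

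Next, I would exploit these conditions pointwise. For $v\in V_i$, the identity $Lg_{ij}(v)=\lambda_N\, g_{ij}(v)$ reads
\[
1-\frac{|N(v)\cap V_i|-|N(v)\cap V_j|}{\deg v}=\lambda_N,
\]
and combined with $|N(v)\cap V_i|=d$ it yields the \emph{$j$-independent} formula
\[
|N(v)\cap V_j|=d+(\lambda_N-1)\deg v \quad\text{for every } j\neq i.
\]
Summing over $j\neq i$ and using $\sum_{j}|N(v)\cap V_j|=\deg v$ gives
\[
(\chi^d-1)\bigl(d+(\lambda_N-1)\deg v\bigr)=\deg v-d.
\]

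Finally, I would rewrite the equality hypothesis $\chi^d(\lambda_N-1+d/\overline{\deg})=\lambda_N$ as
\[
1-(\chi^d-1)(\lambda_N-1)=\frac{\chi^d\, d}{\overline{\deg}},
\]
and substitute it into the previous display. After the $\deg v$-terms cancel, one is left with
\[
\chi^d\, d = \frac{\chi^d\, d}{\overline{\deg}}\,\deg v,
\]
so $\deg v=\overline{\deg}$ for every $v\in V$, hence $G$ is regular.

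The main obstacle is checking that equality in inequality \textup{(1)} is strong enough to force \emph{every} $g_{ij}$ (not merely $g_{12}$) to realize the maximum: this is precisely where one uses that a convex combination equals its maximum only when every term on which a positive weight is placed attains that maximum, together with the fact that the denominators $\sum_{V_i\cup V_j}\deg v$ are all positive. A smaller technical point is the boundary case $d=0$, where the last substitution becomes $0=0$ and regularity does not follow directly from this chain; that subcase needs to be handled separately, but for $d\geq 1$ the argument above closes the proof.
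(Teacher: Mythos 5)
Your argument is correct for $d\geq 1$ and is essentially the paper's own proof: equality forces every $g_{ij}$ to be a $\lambda_N$-eigenfunction and every vertex to have exactly $d$ same-colored neighbors, and evaluating $Lg_{ij}$ at $v\in V_i$ then yields $\deg v=\overline{\deg}$ (the paper additionally evaluates $Lg_{jk}$ at $v\notin V_j\cup V_k$ to see that $e(v,V_j)$ is independent of $j$, but your summation over $j\neq i$ reaches the same identity). Your caveat about $d=0$ is well taken and applies equally to the paper's proof, whose final step is likewise $0=d\cdot\chi^d\bigl(1-\deg v/\overline{\deg}\bigr)$; in fact for $d=0$ the statement fails as written, since $K_{1,2}$ has $\lambda_N=2$, attains the bound $\chi\geq\lambda_N/(\lambda_N-1)=2$, and is not regular --- so this is a gap in the theorem's hypothesis ($d\geq 0$ should be $d\geq 1$) rather than in your argument.
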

\begin{proof}
    Let $V_1,\ldots,V_{\chi^d}$ be the coloring classes with respect to some $d$-improper $\chi^d$-coloring. We define \[e(v,W)\coloneqq \bigl|\bigl\{\{v,w\}\in E\colon w\in W\bigr\}\bigr|.\]
    Since the bound in \eqref{eq:d-improperNL} is attained with equality, it follows from the proof of Theorem \ref{thm:d-improper} that every vertex has exactly $d$ neighbors within its own coloring class. Furthermore, by the min-max principle \eqref{min-max2}, the functions $g_{ij}$ from Definition \ref{def:gij} must be eigenfunctions with corresponding eigenvalue
    \[
    \lambda_N = \frac{\bigl(1-d/\overline{\deg}\bigr)\chi^d}{\chi^d-1}.
    \]
    Now let $v\in V$, and let $i$ be such that $v\in V_i$. Let $j$ and $k$ be distinct from $i$ and from each other. Then,
    \begin{align*}
        0 &= Lg_{jk}(v) = \sum_{\substack{w\in V_k\\ w\sim v}}\frac1{\deg v} - \sum_{\substack{w\in V_j\\ w\sim v}}\frac1{\deg v} \\
        &= \frac1{\deg v}\bigl(e\bigl(v,V_k\bigr) - e\bigl(v,V_j\bigr)\bigr).
    \end{align*}
    Hence,
    \begin{align*}
        e(v,V_j) &=  \frac{\deg v - d}{\chi^d-1}.
    \end{align*}
    Furthermore,
    \begin{align*}
        \frac{\bigl(1-d/\overline{\deg}\bigr)\chi^d}{\chi^d-1} &= Lg_{ij}(v)\\
        &= 1+ \sum_{\substack{w\in V_j\\ w\sim v}}\frac1{\deg v} - \sum_{\substack{w\in V_i\\ w\sim v}}\frac1{\deg v}\\
        &= 1+\frac{e(v,V_j) - e(v,V_i)}{\deg v}\\
        &= 1+\frac{e(v,V_j)-d}{\deg v}.
    \end{align*}
    It follows that
    \begin{align*}
        \frac{1-\chi^d\cdot d/\overline{\deg}}{\chi^d-1} &= \frac{e(v,V_j)-d}{\deg v}.
    \end{align*}
    Rewriting gives
    \begin{align*}
        e(v,V_j) = \deg v \cdot \frac{1-\chi^d\cdot d/\overline{\deg}}{\chi^d-1} + d.
    \end{align*}
    Equating the two expressions for $e(v,V_l)$ gives
    \begin{align*}
        \deg v - d &= \deg v\bigl(1-\chi^d\cdot d/\overline{\deg}\bigr) + d\bigl(\chi^d-1\bigr),\\
        0&= d\cdot \chi^d\biggl(1-\frac{\deg v}{\overline \deg}\biggr).
    \end{align*}
    Therefore, $\deg v = \overline{\deg}$, and since $v$ was arbitrary, we conclude that $G$ must be a regular graph.
\end{proof}
By Theorem \ref{thm:d-improperequality}, the graphs for which \eqref{eq:d-improperNL} holds with equality form a subset of those for which \eqref{eq:d-improperA} is an equality (namely, the regular graphs). Theorem 10 from \cite{guo2024spectral} describes properties of such graphs.
\begin{remark}
    A possible (more intuitive) reason why the bound in \eqref{eq:d-improperNL} cannot be sharp unless $G$ is regular, is that the number of edges cannot be recovered from the normalized Laplacian spectrum of a graph, whereas it can be recovered from its adjacency spectrum. 
\end{remark}

\section{Edge-coloring hypergraphs}\label{sec:edgecol}

In this final section, we shall shine a spectral light on edge-colorings of hypergraphs (Definition \ref{def:edge-coloring}). Here, edges rather than vertices are assigned colors, under the constraint that intersecting edges must receive distinct colors. \newline

Throughout this section, we fix an oriented hypergraph $\Gamma=(V,E,\varphi)$ on $N$ nodes and $M$ edges, and we let $\chi'\coloneqq \chi'(\Gamma)$ denote its 
\emph{strong edge coloring number} (Definition \ref{def:edge-coloring}).\newline 

We shall now define the \emph{edge Laplacian}, which was introduced by Jost and Mulas (2019) \cite{JostMulas2019}.
\begin{definition}\label{def:edgeLaplacian}

    Let $\mathcal I$ and $D$ be the incidence matrix and degree matrix of $\Gamma$ from Definition \ref{def:matrices}, respectively. Then, the \emph{edge Laplacian} of $\Gamma$ is the $M\times M$ matrix
    \[
    L^1 \coloneqq \mathcal I^\top  D^{-1}\mathcal I.
    \]
    We denote its eigenvalues by
    \[
    0 \leq \mu_1\leq \cdots\leq \mu_M.
    \]
\end{definition}
\begin{remark}\label{rmk:multiplicity0}
    Recall that $L=D^{-1}\mathcal I\mathcal I^\top$. Since matrices of the form $AB$ and $BA$ have the same non-zero spectrum, this applies in particular to $L$ and $L^1$. Hence, as  shown in \cite[Corollary 14]{JostMulas2019}, if we let $m_V$ and $m_E$ be the multiplicity of the eigenvalue $0$ of $L$ and $L^1$, respectively, then
    \begin{equation}\label{eq:multiplicity0}
        m_V-m_E = |V| - |E|.
    \end{equation}
    In particular, if $|E|>|V|$, then $\mu_1=0$.
    
    Moreover, note that $$\mu_1\leq \frac N M \text{ and }\mu_N\geq \frac NM,$$
    since $\sum_{i=1}^M \mu_i = N$.
\end{remark}
\begin{example}
    Let $H_{p,k}^c$ be the $c$-uniform hyperflower with $p$ petals. Then, it follows from \eqref{eq:spectrumhyperflower} that the spectrum of $L^1$ equals
    \[
    \biggl\{ c^{(1)}, \bigl(c-k\bigr)^{(p-1)}\biggr\},
    \]
    which consists of the first $M$ non-zero eigenvalues of $L$.
\end{example}
Also for $L^1$, we have a characterization of $\RQ(\gamma)$, for functions $\gamma\colon E\to \R$:
\begin{equation}
    \RQ(\gamma) = \frac{\langle L^1 \gamma,\gamma\rangle}{\langle \gamma,\gamma\rangle} = \frac{\sum_{v\in V}\frac1{\deg v}\cdot \biggl(\sum_{e\ni v\colon v \text{ input }}\gamma(e)-\sum_{e\ni v\colon v \text{ output }}\gamma(e)\biggr)^2}{\sum_{e\in E}\gamma(e)^2}.
\end{equation}
We now prove a theorem connecting the eigenvalues of the edge Laplacian with the strong edge coloring number.

\begin{theorem}\label{thm:edgecoloring}
   Let $\Gamma$ be a $c$-uniform hypergraph in which all vertices are inputs in every edge they belong to, and let $\overline{\deg}$ denote the average degree of $\Gamma$. Then,
    \begin{equation}\label{eq:edgecoloring}
        \chi' \geq \frac{c-\mu_1}{c/\overline{\deg}-\mu_1},
    \end{equation}
    and the inequality is sharp.
\end{theorem}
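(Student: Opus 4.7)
The plan is to adapt the Rayleigh quotient strategy from the proofs of \cref{thm:cor5.4AMZ} and \cref{thm:d-proper} to the edge Laplacian $L^1$. Since $\Gamma$ is $c$-uniform and every vertex is an input in every incident edge, the constant edge function $\mathbf 1\colon E\to \R$ plays a role analogous to that of constant vertex functions in those proofs: a direct calculation using the Rayleigh formula from \cref{def:edgeLaplacian} gives
\begin{equation*}
    \RQ(\mathbf 1) = \frac{\sum_{v\in V}\frac{1}{\deg v}(\deg v)^2}{\sum_{e\in E}1}=\frac{\sum_{v\in V}\deg v}{M}=\frac{cM}{M}=c.
\end{equation*}
Expanding $L^1=\mathcal I^\top D^{-1}\mathcal I$ entrywise yields $L^1_{e,e'}=\sum_{v\in e\cap e'}1/\deg v$, so $L^1_{e,e}=\sum_{v\in e}1/\deg v$ and hence $\tr(L^1)=\sum_e\sum_{v\in e}1/\deg v = N$. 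Combining the two identities above gives $\sum_{e\neq e'}L^1_{e,e'}=\langle L^1\mathbf 1,\mathbf 1\rangle - \tr(L^1)=cM-N$.

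The second step exploits the edge coloring. Fix a strong $\chi'$-edge-coloring with classes $E_1,\dots,E_{\chi'}$. By \cref{def:edge-coloring}, same-color edges are pairwise disjoint, so $L^1_{e,e'}=0$ whenever $e\neq e'$ share a color. For each pair $1\leq i<j\leq \chi'$ I would consider the test function $\gamma_{ij}\coloneqq \mathbf 1_{E_i}-\mathbf 1_{E_j}$. Expanding the quadratic form $\langle L^1\gamma_{ij},\gamma_{ij}\rangle$ and using the vanishing property above collapses it to
\begin{equation*}
    \langle L^1\gamma_{ij},\gamma_{ij}\rangle = \sum_{e\in E_i\cup E_j} L^1_{e,e} - 2 T_{ij},\qquad T_{ij}\coloneqq \sum_{\substack{e\in E_i\\ e'\in E_j}} L^1_{e,e'},
\end{equation*}
while $\langle \gamma_{ij},\gamma_{ij}\rangle = |E_i|+|E_j|$.

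Third, the min-max characterization from \cref{min-max theorem} gives $\mu_1(|E_i|+|E_j|)\leq \sum_{e\in E_i\cup E_j} L^1_{e,e}-2T_{ij}$ for every pair. Summing over $1\leq i<j\leq \chi'$, and using the counts $\sum_{i<j}(|E_i|+|E_j|)=(\chi'-1)M$, $\sum_{i<j}\sum_{e\in E_i\cup E_j}L^1_{e,e}=(\chi'-1)N$, and $\sum_{i<j}T_{ij}=\tfrac12(cM-N)$ (the last using symmetry of $L^1$ and vanishing of same-color terms), I obtain
\begin{equation*}
    \mu_1(\chi'-1)M \leq (\chi'-1)N - (cM-N) = \chi' N - cM.
\end{equation*}
Rearranging to $\chi'(\mu_1 M - N)\leq M(\mu_1-c)$, and noting from \cref{rmk:multiplicity0} that $\mu_1\leq N/M$ so that the coefficient of $\chi'$ is non-positive, division reverses the inequality; substituting $N/M = c/\overline{\deg}$ (which holds since $\overline{\deg}=cM/N$) yields $\chi'\geq (c-\mu_1)/(c/\overline{\deg}-\mu_1)$.

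For sharpness I would use the $c$-uniform hyperflower $H^{c}_{p,k}$ from \cref{ex:hyperflowers}: its $p$ petals pairwise intersect in the central vertices, forcing $\chi'=p$; by \eqref{eq:spectrumhyperflower} and \cref{rmk:multiplicity0} the smallest edge-Laplacian eigenvalue equals $\mu_1=c-k$; and $c/\overline{\deg}=N/M=(k+p(c-k))/p=c-k+k/p$, so the right-hand side of the bound evaluates to $k/(k/p)=p=\chi'$. The main obstacle I anticipate is the sign bookkeeping in the collapse of $\langle L^1\gamma_{ij},\gamma_{ij}\rangle$ (which requires both the disjointness of same-color edges \emph{and} the positivity of all orientations so that no cross terms survive) together with the degenerate case $\mu_1=N/M$, in which the denominator vanishes; this case forces the entire $L^1$-spectrum to be constant, hence $\overline{\deg}=1$, and so can be dispatched separately as a boundary situation in which the bound is vacuous.
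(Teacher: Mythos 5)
Your proposal is correct, and it follows the same overall strategy as the paper: compute $\RQ(\boldsymbol 1)=c$ for the edge Laplacian, exploit the disjointness of same-colored edges to kill the within-class cross terms, test $L^1$ against the $\pm1$ functions $\gamma_{ij}$ supported on two color classes, invoke the min-max principle for $\mu_1$, and certify sharpness with the $c$-uniform hyperflower (where the paper also gets $\chi'=p$, $\mu_1=c-k$, $N/M-\mu_1=k/p$). The one substantive difference is how the pairwise information is aggregated. The paper isolates the extremal pair (``WLOG $(1,2)$ maximizes the cross-term ratio'') and writes $c\leq N/M+(\chi'-1)(N/M-\RQ(\gamma_{12}))\leq \chi' N/M-(\chi'-1)\mu_1$; this step tacitly identifies the diagonal contribution to $\RQ(\gamma_{12})$, namely $\sum_{e\in E_1\cup E_2}L^1_{e,e}/(|E_1|+|E_2|)$, with the global average $N/M$, which is not automatic for an individual pair of classes. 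You instead apply $\mu_1\leq\RQ(\gamma_{ij})$ to \emph{every} pair and sum, using the exact identities $\tr(L^1)=N$, $\sum_{i<j}(|E_i|+|E_j|)=(\chi'-1)M$, and $\sum_{i<j}T_{ij}=\tfrac12(cM-N)$; the diagonal bookkeeping then closes exactly because each edge is counted $\chi'-1$ times, and no per-pair normalization is needed. So your averaging variant is slightly more robust at that point, at the cost of losing the pairwise information that the paper's extremal-pair argument feeds into its equality analysis (Proposition \ref{prop:edgecoloring}, where each $\gamma_{1j}$ individually becomes an eigenfunction). Your treatment of the degenerate case $\mu_1=N/M$ (all eigenvalues equal, hence $N/M=c$, $\overline{\deg}=1$, disjoint edges) matches Convention \ref{conv:2}.
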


We need the following:
\begin{convention}\label{conv:2}
We use the convention that, if $\mu_1 = \frac{c}{\overline{\deg}} = \frac NM$ (cf. Remark \ref{rmk:multiplicity0}), then
\[
\frac{c-\mu_1}{c/\overline{\deg}-\mu_1} = 1.
\]
Note that, for $c$-uniform hypergraphs, this situation can only occur if $\mu_1=\mu_N=c$, which means that $\Gamma$ is given by $N/c$ disjoint edges of size $c$.
    
\end{convention}

\begin{proof}
    Let $V_1,\ldots, V_{\chi'}$ be the coloring classes with respect to some proper strong edge coloring. For distinct $i,j\in\{1,\ldots,\chi'\}$, define the function $\gamma_{ij}\colon E\to\R$ as follows:
    \begin{equation}\label{eq:gammaij}
    \gamma_{ij}(e)\coloneqq \begin{cases}
        1, &\text{ if } e\in V_i,\\
        -1,&\text{ if }e\in V_j,\\
        0, &\text{ otherwise.}
    \end{cases}
    \end{equation}
    Note that the all-ones function $\boldsymbol 1$ is an eigenfunction of $L^1(\Gamma)$ with eigenvalue $c$. Hence,
    \begin{align*}
        c &= \RQ(\boldsymbol 1) = \frac{\sum_{v\in V}\frac 1{\deg v}\biggl(\sum_{e\ni v}\boldsymbol 1(e)\biggr)^2}{\sum_{e\in E}\boldsymbol 1(e)^2}\\
        &= \frac{\sum_{e\in E}\sum_{e'\in E}\biggl(\sum_{\substack{v\in V\\v\in e'\cap e}}\frac1{\deg v}\biggr)}{M}\\
        &= \frac1{M}\biggl( \sum_{v\in V}\sum_{e\ni v}\frac1{\deg v} + 2\sum_{i\neq j}\sum_{e\in V_i}\sum_{e'\in V_j}\biggl(\sum_{\substack{v\in V\\ v\in e\cap e'}}\frac1{\deg v}\biggr)\biggr)\\
        &= \frac{N}{M} + 2\frac{\sum_{i\neq j}\sum_{e\in V_i}\sum_{e'\in V_j}\biggl(\sum_{\substack{v\in V\\ v\in e\cap e'}}\frac1{\deg v}\biggr)}{\frac1{\chi'-1}\cdot \sum_{i\neq j}\sum_{e\in V_i\cup V_j}1}\\
        &\overset{\text{WLOG}}{\leq} \frac{N}{M} + 2(\chi'-1) \frac{\sum_{e\in V_1}\sum_{e'\in V_2}\biggl(\sum_{\substack{v\in V\\ v\in e\cap e'}}\frac1{\deg v}\biggr)}{\sum_{e\in V_1\cup V_2}1}\\
        &\overset{(i)}{=} \frac{N}{M} + (\chi'-1)\biggl(\frac{N}{M}-\RQ(\gamma_{12})\biggr)\\
        &\overset{(ii)}{\leq} \chi' \cdot \frac{N}{M}-(\chi'-1)\cdot \mu_1,
    \end{align*}
    where in $(i)$ we used the fact that
    \[
    \RQ(\gamma_{12}) = \frac NM - 2\frac{\sum_{e\in V_1}\sum_{e'\in V_2}\biggl(\sum_{\substack{v\in V\\ v\in e\cap e'}}\frac1{\deg v}\biggr)}{\sum_{e\in V_1\cup V_2}1},
    \]
    while in $(ii)$ we use the min-max principle from \eqref{min-max2}.
    
    The inequality we found is equivalent to
    \[
    \chi'\geq \frac{c-\mu_1}{N/M-\mu_1} = \frac{c-\mu_1}{c/\overline{\deg}-\mu_1},
    \]
    provided that $c/\overline{\deg}-\mu_1>0$; otherwise, the inequality is trivial (by Convention \ref{conv:2}).

    Finally, to show that the inequality is sharp, consider $c$-uniform hyperflowers $H_{p,k}^c$ with $p$ petals and $k$ central vertices. In this case, it follows from \eqref{eq:spectrumhyperflower} that $\mu_1=c-k$, and it is easy to see that $\chi' = p=M$. Furthermore,
    \[
    \frac{c-\mu_1}{N/M-\mu_1} = \frac{k}{(M(c-k)+k)/M-(c-k)} = \frac{k}{k/M} = M,
    \]
    hence the bound is attained with equality.
\end{proof}
We have the following proposition about the equality case in Theorem \ref{thm:edgecoloring}. 
\begin{proposition}\label{prop:edgecoloring}
    Let $\Gamma$ be a $c$-uniform hypergraph in which all vertices are inputs in all edges they belong to, and for which the bound in \eqref{eq:edgecoloring} is attained with equality. Then, we have the following two cases:
    \begin{enumerate}
        \item If $\mu_1=0$, then $\Gamma$ is $k$-regular for some $k$, and $\chi' = k$. Furthermore, the multiplicity of the eigenvalue $0$ is at least $k-1$.
        \item If $\mu_1>0$, then $\mu_1$ is an eigenvalue of $L^1$ and $L$ with multiplicity at least $\chi'-1$.
    \end{enumerate}
\end{proposition}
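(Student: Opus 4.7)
The plan is to trace through the proof of Theorem \ref{thm:edgecoloring} and identify which conditions force each intermediate inequality to become an equality. Two inequalities are used there: (i) the ``WLOG'' step, which replaces the $B_{ij}$-weighted average of the ratios $A_{ij}/B_{ij}\coloneqq \tfrac{1}{|V_i|+|V_j|}\sum_{e\in V_i}\sum_{e'\in V_j}\sum_{v\in e\cap e'}\tfrac{1}{\deg v}$ by their maximum; and (ii) the application of the min-max principle giving $\RQ(\gamma_{12})\geq \mu_1$. Equality in (i) forces all ratios $A_{ij}/B_{ij}$, and therefore all values $\RQ(\gamma_{ij})$, to coincide. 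Equality in (ii) forces $\RQ(\gamma_{12})=\mu_1$, so $\gamma_{12}$ attains the Rayleigh minimum and is hence an eigenfunction of $L^1$ for $\mu_1$. Combining these, \emph{all} $\gamma_{ij}$ from \eqref{eq:gammaij} are $\mu_1$-eigenfunctions of $L^1$.

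Next I would verify linear independence: fixing the index $1$ and varying $j\in\{2,\dots,\chi'\}$ gives $\chi'-1$ functions $\gamma_{1j}$ supported on pairwise distinct unions of two color classes, and they are clearly linearly independent (one may read off each indicator $\mathbf{1}_{V_j}$ from differences $\gamma_{1j}-\gamma_{1k}$). This shows that the multiplicity of $\mu_1$ as eigenvalue of $L^1$ is at least $\chi'-1$. In Case 2, where $\mu_1>0$, Remark \ref{rmk:multiplicity0} transfers this multiplicity to $L$, finishing that case immediately.

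For Case 1 ($\mu_1=0$), the above still gives $\chi'-1$ linearly independent $\gamma_{ij}$ in $\ker L^1$, but I additionally need to show $\Gamma$ is $k$-regular with $k=\chi'$. Here I use the factorization $L^1=\mathcal I^\top D^{-1}\mathcal I$: since $D^{-1}$ is positive definite, $L^1\gamma=0$ is equivalent to $\mathcal I\gamma=0$. Evaluating $\mathcal I\gamma_{ij}(v)=0$ at a vertex $v$ and using that all vertices are inputs, I obtain that
\[
\bigl|\{e\in V_i:v\in e\}\bigr|=\bigl|\{e\in V_j:v\in e\}\bigr|\qquad\text{for all }i\neq j.
\]
Call this common value $n(v)$. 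Because the coloring is a \emph{strong} edge coloring, two distinct edges sharing $v$ cannot have the same color, so $n(v)\in\{0,1\}$. Excluding isolated vertices (required for $D^{-1}$ to exist), $n(v)=1$ for every $v$, giving $\deg v=\chi'$ for all $v\in V$. Hence $\Gamma$ is $\chi'$-regular with $k=\chi'$, and the multiplicity of $0$ as eigenvalue of $L^1$ is at least $\chi'-1=k-1$.

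The main obstacle I anticipate is the regularity argument in Case 1: simply extracting eigenfunctions is routine, but translating the kernel condition into a combinatorial statement about per-color edge counts and then exploiting the strong-coloring constraint requires the observation $L^1\gamma=0\Longleftrightarrow \mathcal I\gamma=0$, which is the decisive step. Once this is in place, the rest of the argument is a direct bookkeeping computation, and the two cases can be presented in parallel with Case 2 as a short consequence of Remark \ref{rmk:multiplicity0}.
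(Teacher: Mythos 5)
Your proposal is correct, and its skeleton (forcing equality in both intermediate inequalities of the proof of Theorem \ref{thm:edgecoloring}, concluding that all $\gamma_{ij}$ are $\mu_1$-eigenfunctions of $L^1$, and counting $\chi'-1$ linearly independent ones) matches the paper. The differences are in how the two cases are finished. For Case 2 you invoke the general fact that $\mathcal I^\top D^{-1}\mathcal I$ and $D^{-1}\mathcal I\mathcal I^\top$ share their non-zero spectrum with multiplicities, whereas the paper exhibits the eigenfunctions $D^{-1}\mathcal I\gamma_{1j}$ of $L$ explicitly; these are the same argument in two guises. The real divergence is Case 1: the paper obtains regularity purely numerically, without touching the eigenfunctions, by noting that equality with $\mu_1=0$ gives $\chi'=\overline{\deg}$, while the trivial bound $\chi'\geq\max\deg$ (all edges through a vertex pairwise intersect, hence get distinct colors) forces $\max\deg=\overline{\deg}$. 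You instead pass through the kernel characterization $L^1\gamma=0\iff\mathcal I\gamma=0$ and deduce that every vertex meets the same number $n(v)\in\{0,1\}$ of edges of each color, hence exactly one. Your route is longer but buys a strictly stronger structural conclusion (each non-isolated vertex lies in exactly one edge of every color, not merely $\deg v=\chi'$), and it is sound: the reduction $L^1\gamma=0\Rightarrow\|D^{-1/2}\mathcal I\gamma\|^2=0\Rightarrow\mathcal I\gamma=0$ is valid, and the strong-coloring constraint correctly caps $n(v)$ at $1$. Either argument is acceptable; the paper's is the more economical of the two.
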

\begin{proof}
    For the first case, assume that $\mu_1=0$. Then, the bound from \eqref{eq:edgecoloring} becomes
    \[
    \chi'\geq \overline{\deg}.
    \]
    Since we have the trivial bound $\chi'\geq \max\deg$, it must hold that $\overline \deg = \max\deg$, which implies that $\Gamma$ is $\overline \deg$-regular, and $\chi' = \overline{\deg}$.

    Furthermore, if $\mu_1=0$, then the functions $\gamma_{1i}$ for $i=1,\ldots,\chi'$ from \eqref{eq:gammaij} are linearly independent eigenfunctions with eigenvalue $\mu_1=0$. Therefore, the multiplicity of $0$ is at least $\chi'-1=k-1$.

    For the second case, assume that $\mu_1>0$. Also in this case, for $j=2,\ldots,\chi'$, the functions $\gamma_{1j}$ from \eqref{eq:gammaij} are linearly independent eigenfunctions of $L^1$ with eigenvalue $\mu_1$. Hence, the functions $D^{-1}\mathcal I\gamma_{ij}$ are $\chi'-1$ linearly independent eigenfunctions of $L$ with eigenvalue $\mu_1$.
\end{proof}
We end with one final remark.
\begin{remark}
    Let $\Gamma$ be a $c$-uniform hypergraph in which all vertices are inputs in all edges they belong to.
    If $M>N$, then $\mu_1=0$, and the bound from \eqref{eq:edgecoloring} becomes $\chi'\geq\overline{\deg}$, which does not provide any additional information beyond the trivial bound $\chi'\geq \max\deg$. However, if $M<N+\chi'-1$, then $0$ is an eigenvalue of $L$ with multiplicity at least $N+\chi'-1-M$.
\end{remark}

\section*{Funding}
Raffaella Mulas is supported by the Dutch Research Council (NWO) through the grant VI.Veni.232.002.

\color{black}
\bibliographystyle{plain} 

\bibliography{Bibliography}

\end{document}